\newtheorem{e-proposition}[theorem]{Proposition}
\newcommand{\RR}{\mathbb{R}}
\newcommand{\N}{\mathbb{N}}
\def\DD{\mathcal{D}}
\def\VV{\mathcal{V}}
\def\UU{\mathcal{U}}
\def\TT{\mathcal{T}}
\def\II{\mathcal{I}}
\def\SST{\mathcal{S}}
\renewcommand{\leq}{\leqslant}
\renewcommand{\geq}{\geqslant}
\renewcommand{\tilde}{\widetilde}
\renewcommand{\t}{\tilde}
\newcommand{\di}{\displaystyle}
\DeclareMathOperator\supp{supp}
\def\1{\mathbbm{1}}
\def\Sph{\mathbb{S}^{N-1}}
\numberwithin{equation}{section}
\newtheorem{Lemma}{Lemma}[section]
\newtheorem{Th}[Lemma]{Theorem}
\newtheorem{Prop}[Lemma]{Proposition}
\newcommand{\be}{\begin{equation}}
\newcommand{\ee}{\end{equation}}
\newcommand{\baa}{\begin{array}}
\newcommand{\eaa}{\end{array}}
\newcommand{\ba}{\begin{eqnarray}}
\newcommand{\ea}{\end{eqnarray}}
\renewcommand\phi{\varphi}
\def\epsilon{\varepsilon}
\def\mc{\mathcal}
\def\trait (#1) (#2) (#3){\vrule width #1pt height #2pt depth #3pt}
\def\fin{\hfill\trait (0.1) (5) (0) \trait (5) (0.1) (0) \kern-5pt \trait (5) (5) (-4.9) \trait (0.1) (5) (0)}
\def\todo#1 {\marginpar{\textcolor{red}{$\Rightarrow$#1}}}
\def\TI{T}
\newcommand{\bee}{\begin{equation*}}
\newcommand{\eee}{\end{equation*}}
\newcommand{\bc}{\begin{cases}}
\newcommand{\ec}{\end{cases}}
\def\e{\varepsilon}
\newenvironment{formula}[1]{\begin{equation}\label{#1}}
{\end{equation}\noindent}
\def\Fi#1{\begin{formula}{#1}}
\def\Ff{\end{formula}\noindent}
\def\cSIR{c_{\scalebox{0.5}{$SIR$}}}
\def\cSIRT{c_{\scalebox{0.5}{$SIR$}}^{\scalebox{0.5}{$T$}}}
\def\wSIR{w_{\scalebox{0.5}{$SIR$}}}
\def\wSIRT{w_{\scalebox{0.5}{$SIR$}}^{\scalebox{0.5}{$T$}}}
\def\WSIRT{\omega_{\scalebox{0.5}{$SIR$}}^{\scalebox{0.5}{$T$}}}
\begin{document}
\title{\bf Propagation of epidemics along lines with fast diffusion}
\author{Henri {\sc Berestycki}$^{\hbox{a,b}}$,
Jean-Michel {\sc Roquejoffre}$^{\hbox{c}}$, Luca {\sc Rossi}$^{\hbox{a}}$\\
\footnotesize{$^{\hbox{a }}$ Ecole des Hautes Etudes en Sciences Sociales,  CNRS}\\
\footnotesize{Centre d'Analyse et Math\'ematiques Sociales,}\\ 
\footnotesize{54 boulevard Raspail, F-75006 Paris, France}\\
\footnotesize{$^{\hbox{b }}$ Senior Visiting Fellow, HKUST Jockey Club Institute for Advanced Study, }\\
\footnotesize{Hong Kong University of Science and Technology}\\
\footnotesize{$^{\hbox{c }}$ Institut de Math\'ematiques de Toulouse,
Universit\'e Paul Sabatier}\\
\footnotesize{118 route de Narbonne, F-31062 Toulouse Cedex 4, France}
}
\maketitle

\vspace{.2cm}

\begin{abstract} It has long been known that epidemics can travel along communication lines, such as roads. In the  current COVID-19 epidemic, it has been observed that major roads have enhanced its propagation in Italy. We propose a new simple model of propagation of epidemics which exhibits this effect and allows for a quantitative analysis. The model consists of a classical $SIR$ model with diffusion, to which an additional compartment is added, formed by the infected individuals travelling on a line of fast diffusion.
Exchanges between individuals on the line and in the rest of the domain are taken into account. 
A classical transformation allows us to reduce the proposed model to a system analogous to one  we had previously introduced \cite{BRR2} to describe the enhancement of biological invasions by lines of fast diffusion. We establish the existence of a minimal spreading speed and we show that it may be quite large, even  when the basic reproduction number~$R_0$ is close to $1$. More subtle qualitative features of the final state, showing the important influence of the line, are also proved here.
 \end{abstract}
{\bf Keywords:} COVID-19, epidemics, SIR model, reaction-diffusion system, line of fast diffusion, spreading speed,  nonlinear PDEs.

\hypersetup{linkcolor=black}
\hypersetup{linkcolor=red}


\section{Context and motivation of this study}\label{mot}

In the present context of the COVID-19 pandemic, a worldwide scientific effort is currently under way to develop the modelling of its dynamics and propagation. Such an endeavour is of essential value to monitor, and forecast the propagation of the epidemic.

Most of the models that are used rely on various extensions of the classical $SIR$ cornerstone model of epidemiology. That is, they use population compartmental models that contain
various additional compartments to the $SIR$ ones to account for segments of the populations that are exposed, asymptomatic, presymptomatic, treated,~etc. Such models yield the evolution of the infected population at a given level of territorial granularity (whole countries, regions, counties or cities). The spatial interplay aspect, mostly overlooked, is included by involving transfer matrices of populations and infected between various patches each of which being considered as uniform. 

Yet, the propagation of COVID-19 exhibits remarkable spatial structure properties. 
Indeed, the spatial organization and spreading of epidemics in general reveal important features of the transmission process. This is especially true in relation with movements of individuals who carry with them infectious characteristics. It has long been known,
since ancient times, that epidemics travel along lines of communications. In the black death epidemic of the 14th century, contagion advanced along roads connecting trade fair cities,
and from there spread inwards leading to a front like invasion of Western Europe roughly from South to North,  pulled by these roads. Such a mode of propagation was also at work for the propagation of rumours. (See for instance the propagation of the ``big fear'' in France, after the Revolution.
Compare, for example, the presentation given by Siegfried in \cite{Sieg} and the analogies between these two phenomena.)
In studying the early spread of HIV virus in Congo \cite{Congo}, Faria et al.~pointed out that the virus mostly travelled along the main lines of communications (railways and waterways), see Figure~\ref{fig:HIV} below.  
\begin{figure}[h]
	\begin{center}
		\includegraphics[width=.5\textwidth]{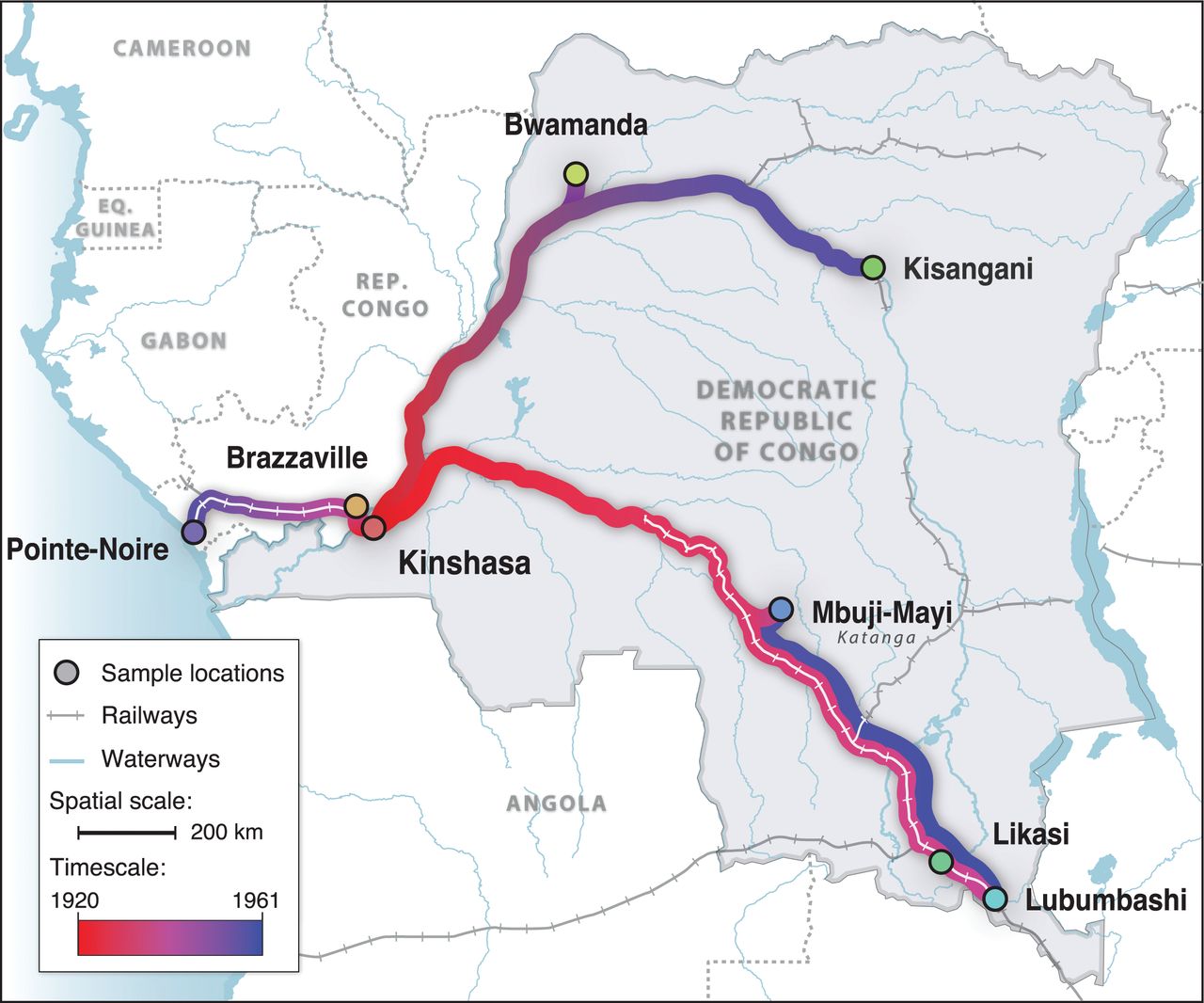}
		\caption{From Faria et al.~\cite{Congo} in {\it Science}.
			Strongly supported rates of virus spatial movement are projected along the transportation network for the Democratic Republic of Congo (railways and waterways). {\it Reprinted with permission of AAAS.}}
		\label{fig:HIV}
	\end{center}
\end{figure}

Some current studies are bringing to light a similar effect in the spread of the COVID-19 virus in Italy.  
Gatto et al.~\cite{COVID-Gatto} and Sebastiani~\cite{COVID-Sebastiani} have established that the coronavirus spread foremost along the main expressways. They argue indeed that cities located along the main North-South and East-West highways in Italy have faced an earlier and stronger contagion than
cities with similar or higher population sizes but not located on these roads. 
The spatial signatures of the spreading of the epidemic is clearly revealed by
Figure~\ref{fig:Gatto_Fig1} taken from the work~\cite{COVID-Gatto} by 
Gatto et al.
\begin{figure}[h]
	\begin{center}
		\includegraphics[width=\textwidth]{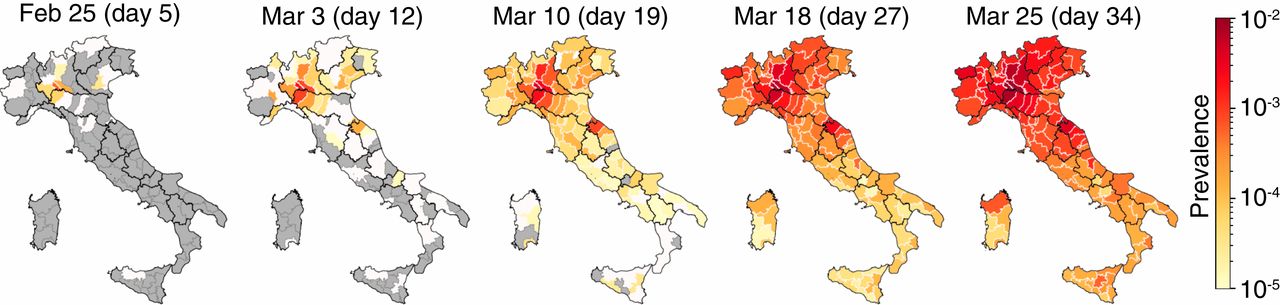}
		\caption{From Gatto et al.~\cite{COVID-Gatto} in {\it PNAS}. Spread of COVID-19 in Italy at the province level, spanning the time period from Feb.~25 to Mar.~25. The quantity represented is the ratio between the total number of confirmed cases 
			and the resident population. {\it Creative Commons Attribution License 4.0 (CC BY).}}
		\label{fig:Gatto_Fig1}
	\end{center}
\end{figure}

Then, Figure~\ref{fig:Gatto_Fig3} is a snapshot from a video in~\cite{COVID-Gatto} which accounts for 
hospitalisation. It highlights the
radiation of the epidemic along highways and transportation infrastructures.
Figure~\ref{fig:Sebastiani} 
below, taken from Sebastiani~\cite{COVID-Sebastiani}, 
depicts the Italian cities with more than 1,000 infected individuals reported as for April 5. 
For two of them, Piacenza and Cremona, which are located respectively 
at the crossroad of these two main highways and 40 Km away from it,
infected people are above 1\% of the population.
We point out that the cities represented in this figure are by far not the most populated ones in Italy.
%
\begin{figure}[h]
	\begin{center}
		\includegraphics[width=\textwidth]{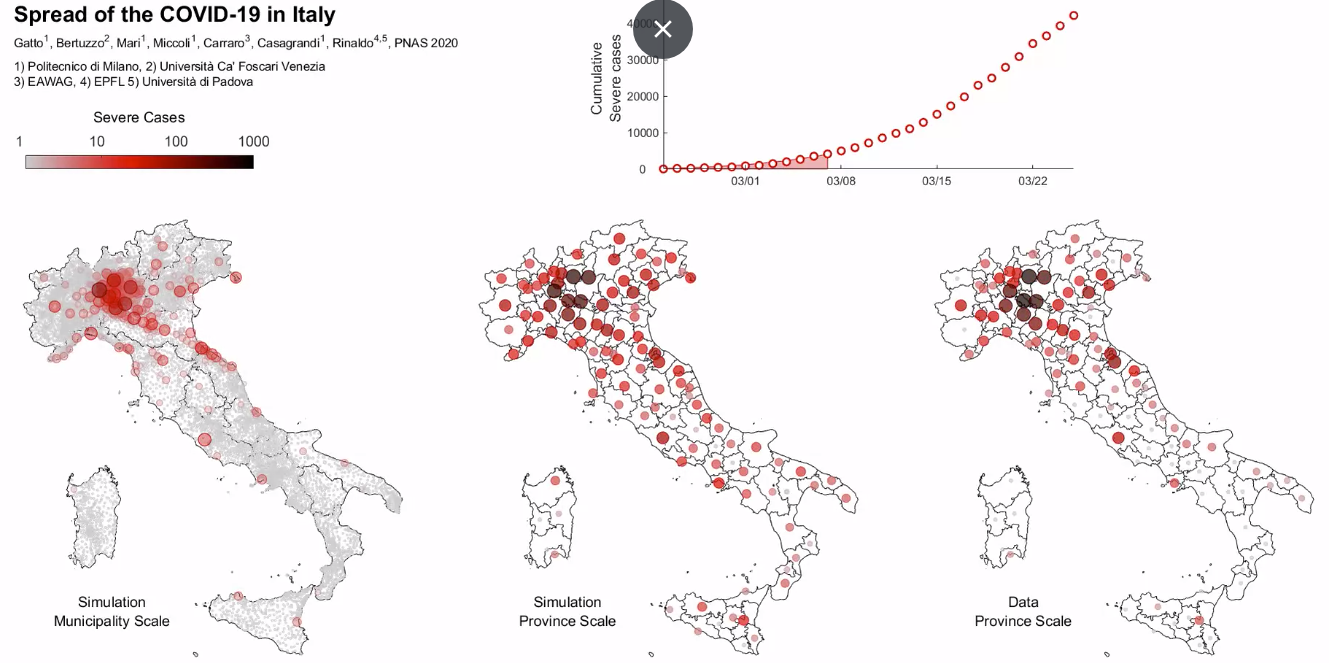}
		\caption{From Gatto et al.~\cite{COVID-Gatto} in {\it PNAS}. 
			Comparison between simulated and recorded
			cumulative number of severe cases that required hospitalization as for March 5: 
			model simulation at the municipality scale (left) and province scale (centre); 
			recorded data at the province scale (right).
			{\it Creative Commons Attribution License 4.0 (CC BY).}}
		\label{fig:Gatto_Fig3}
	\end{center}
\end{figure}

That this effect of roads still matters in such a global and ``modern'' epidemic as COVID-19 shows that spatial diffusion still plays an important role in the spreading of epidemics.
At an early stage, 
long distance ``jumps'' through the air transport network played the key role in the global dissemination of the pandemic. However,
the role of ground transportation became relevant in a second phase. This is even more true after the lockdown in many countries, that  all but  halted all flights.
Even though ground travel was limited, it was never interrupted and, furthermore, ground transportation was essential in transporting all needed supplies.

\begin{figure}[h]
	\begin{center}
		\includegraphics[height=5.5cm, width=6.5cm]{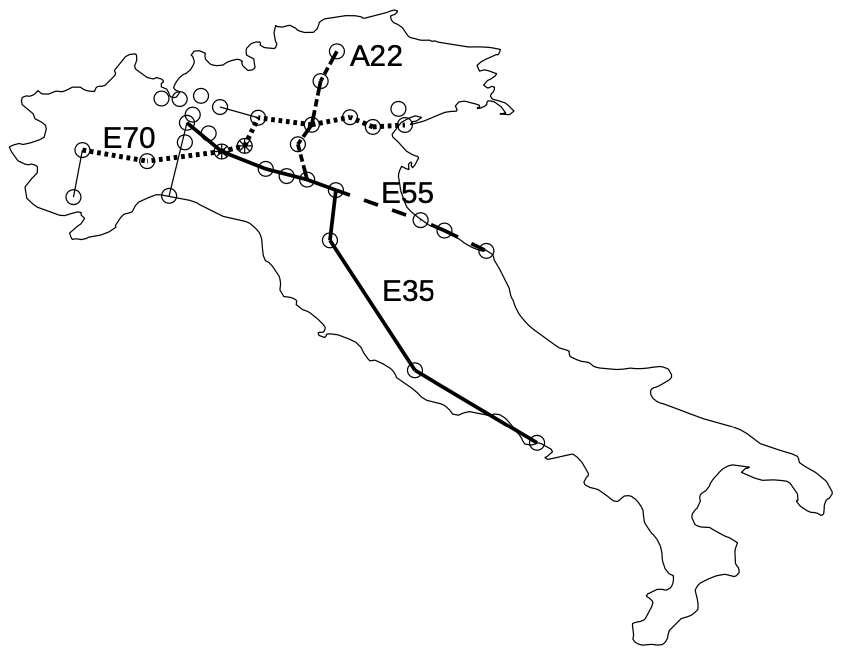}
		\caption{From Sebastiani~\cite{COVID-Sebastiani}. The 33 Italian cities where, as of April 5, the total number of 
			infected people reported is larger than 1,000; the streaked circles
			correspond to Piacenza and Cremona, where infected people are above 1\% of the population.
			{\it Reproduced with permission of G.~Sebastiani.}}
		\label{fig:Sebastiani}
	\end{center}
\end{figure}


With respect to patch models that include transfer matrices, it thus transpires that the effect of lines is of a different nature. And as we have seen, these lines can be roads, railroads or rivers.

The aim of this paper is to propose a new model to account for such effects and then to study quantitatively how a line acts on the overall epidemics propagation. We thus introduce a model that we call the $SIRT$ system, 
standing for {\bf S}usceptibles, 
{\bf I}nfected, {\bf R}ecovered, and {\bf T}ravelling infected. This model takes explicitly into account the existence of a line along which infected individuals can travel with a specific diffusion coefficient. 
Our aim here is to gain insight into this spreading aspect at the fundamental mathematical level of a $SIR$ type model that now incorporates the possibility of infected to travel along a specific line.

The $SIR$ model dates back the fundamental paper of  Kermack-McKendrick~\cite{KMK}, 
Kendall~\cite{Kend2}  introduced the $SIR$ model with spatial interaction in the discussion of a statistical study of measles by Bartlett~\cite{Bart}. 
These models may be rewritten as nonlinear integro-differential equations in time and space variables, for which the study of spreading  dates back to the 1970's with, in particular, the milestone papers of Aronson \cite{A} and Diekmann \cite{Diek}. 
We choose in this paper to restrict our model to local (Brownian) diffusion and local interaction. 
Regarding the existence of travelling waves for the local (homogeneous) diffusion model, see the studies of K\"allen \cite{Ka}, Hosono-Ilyas  \cite{HI2, HI}. For a survey, see Murray \cite{Murray}.  

The model we propose here aims at a fundamental mathematical understanding of this effect.  We look at a stylized situation -- a laboratory case as it were. The population of susceptibles diffuse  
in a homogeneous
open territory, that we take to be a half plane. Then, the infected can travel along the line bounding this half plane. The populations of infected in the open territory and on the lines are 
in constant exchange. This is represented in our model by transmission parameters between these two populations (stable and travelling infected). We aim at understanding the effects of 
such lines on the speed of propagation of the epidemics, and how they affect the balance of the
total number and locations of infected individuals. 

As a benchmark we use the classical $SIR$ model with diffusion, for which  we briefly recall the basic results and 
how to compute the speed of spreading. 
The latter coincides with that of  
Aronson-Weinberger \cite{AW}, we simply rewrite the formulas   in terms of the basic reproduction number~$R_0$.   

Let us mention that the $SIR$ model falls into a more general class of systems in which one equation
--~the one for $I$~here~-- exhibits a
self-reinforcement mechanism when the other unknown function --~$S$~-- 
is above some threshold level. 
Berestycki-Nordmann-Rossi~\cite{BNR-SIR} develop a
mathematical study of this class of systems,
called {\em Activity/Modulator}. 
As described there, $SIR$ models also arise 
in a variety of contexts to model contagion phenomena. 
They are particularly useful in the study of collective behaviours,
such as social unrest. The work by Bonasse-Gahot et al.
\cite{Bonnasse-Gahot2018}, developed such a system to model the spread of riots in France in 2005.


Of course, more realistic epidemiology models will incorporate networks rather than single lines and, likewise,
the remaining propagation does not take place in a homogeneous territory.
From this simplified model, one can nonetheless deduce a more realistic one involving a network of roads and a distribution of cities. Regarding the latter aspect, Bonasse-Gahot et al.~\cite{Bonnasse-Gahot2018} 
use explicitly such a network of cities. 

This family of models lends itself to various extensions. But since we want to derive the mathematical properties of this system we only consider here the stylized model.
We hope that this stylized model can shed some light on how lines influence the global unfolding of an~epidemic.

\subsection*{Acknowledgment}
The authors are thankful to Italo Capuzzo-Dolcetta for bringing to their attention the works of Gatto et al.~\cite{COVID-Gatto} and Sebastiani~\cite{COVID-Sebastiani} on the role of major roads on the propagation of COVID-19 in Italy.


\section{A model for the propagation of epidemics along lines}\label{s1}

\subsection{The $SIR$ model with infected diffusion}
The classical $SIR$ model divides the overall population in three compartments: Susceptibles $S$, Infected (and infectious) $I$ and recovered $R$. Since the total population $N$ is viewed as fixed, one derives the latter in a straightforward manner from the former two: $ R= N- S-I$. Therefore, we do not mention explicitly this function henceforth. When taking into account spatial dependence, we view the unknowns $S$ and $I$ as depending on both time $t$ and space location $X\in\RR^2$.
We consider that the individuals in the susceptible population $S$ do not move around (or rather that movement does no affect its distribution). We can think of $S$ as the ambient population. 
Thus, we assume that only the infected population $I$ is subject to movement. We choose here to represent this movement as a pure local diffusion that can be viewed as a limiting Brownian movement of individuals. In the second part of this work \cite{BRRnonloc}, we consider the case of non-local diffusions. 
We denote by $d$ the diffusion coefficient. We are thus led to the following spatial diffusion $SIR$ model:
\begin{equation}\label{eSIR}
\left\{ \begin{array}{ll} 
	\partial_tI-d\varDelta I=\beta SI -\alpha I & \ (t>0,\ X\in \mathbb {R}^2) \\ 
	\partial_tS=-\beta SI & \ (t>0,\ X\in \mathbb {R}^2).\\ 
\end{array} \right.
\end{equation}
The system is supplemented with initial conditions. We assume that the initial distribution of 
$S$, $S(0, X)\equiv S_0$ is constant and that $I(0,X)= I_0(X)$  is compactly supported.  The cumulative number of infected at location $X$, at time $t$ is given by $\int_0^{t} I(t, X) dt$.
Several authors have considered this model or closely related ones with integral formulations. We mention some of the references in Subsection~\ref{sec:SIR} below. For the sake of completeness we derive here the main results concerning this system \eqref{eSIR}: existence and properties of a final state to which the solutions and the cumulative number of 
infections at each location converge, characterization of epidemic spreading, and asymptotic speed of propagation. These properties involve the classical basic reproduction number 
$R_0:= S_0 \beta / \alpha$. As we will recall, the position of $R_0$ with respect to~$1$ determines here too a threshold for the epidemic to spread. These various properties will serve as benchmarks for our study of the effect of the presence of a road.

\subsection{A ``$SIRT$'' model in the presence of a road}

Let us now introduce this system, which we call a {\em road/field} model. It represents a situation where there is a {\em road} on which infected individuals can travel with a specific diffusion coefficient $D$. We are especially interested in the case of a large $D$. The aim is to understand in what way such a road alters the epidemic spreading, in particular if it enhances its propagation and in what measure exactly. 
The starting point of our analysis 
consists in distinguishing the infected individuals which are moving on the line with fast diffusion 
from the ones present in the rest of the territory, by using two distinct density functions.
This is the same modelling hypothesis we made in our previous paper~\cite{BRR2}
to describe the dynamics of a single population in the presence of a road.

As before we use the densities $S(t,x,y)$ and $I(t,x,y)$ of susceptible and infected individuals at time $t\geq0$ and position $X=(x,y)\in\RR^2$. We still assume that we can ignore the movement of susceptibles.
We introduce a new compartment of the population that we call $\TI(t,x)$ standing for ``travelling individuals''. This is the density of infected individuals on the road, that is the line $\RR$.
It is worth emphasizing that $I(t,x,y)$ and $\TI(t,x)$ are two different compartments, in particular  $\TI(t,x)$ is not the same as $I(t,x, 0)$. 
Both populations are assumed to diffuse, but with different diffusion coefficients: $D$ for $\TI$ on the line and 
$d$ for $I$ in the plane. The two compartments interact by a constant exchange of  individuals: at each time $t>0$ and point $x\in\RR$, 
the line  yields an amount $\mu \TI(t,x)$ of individuals to the domain, 
and receives an amount $\nu I(t,x,0)$ of individuals from the domain. 

To shed light on the effect of such a road it is enough to consider the interplay of a half-plane with the road that bounds it. Indeed, results in this framework can be translated for results in the whole space by straightforward symmetry arguments. But it can also be seen that propagation properties in the plane in general can easily be deduced from the half-plane case. (For such a discussion, we refer to our earlier paper for the KPP equation with a road \cite{BRR2}.)
We prefer to carry our analysis in the half-plane case for the sake of clarity and to simplify notations. 

Thus, we consider the following system for the unknown $S, I, \TI$: 
By symmetry, we can reduce to study the problem in the upper half-space $\RR\times(0,+\infty)$.
We end up with the following system :
\begin{equation}
\label{e1.1}
\left\{ \begin{array}{ll} \partial_tI-d\varDelta I+\alpha I=\beta SI & (t>0,\ x\in \mathbb {R},\ y>0)\\ 
	\partial_tS=-\beta SI & (t>0,\ x\in \mathbb {R},\ y>0)\\ 
	- d\partial_yI=\mu T-\nu I & (t>0,\ x\in \mathbb {R},\ y=0)\\ 
	\partial_t T-D\partial_{xx}T=\nu I(t,x,0)-\mu T \ & (t>0,\ x\in \mathbb {R},\ y=0). \end{array} \right.
\end{equation}
We assume a uniform initial susceptible density: $S(0,x,y)\equiv S_0>0$. 
The initial density of infected individuals is assumed to be zero outside a bounded region: $$(\TI(0,x),I(0,x,y))=(\TI_0(x),I_0(x,y))$$ 
are nonnegative and compactly supported in $\RR$ and $\RR\times[0,+\infty)$ respectively,
with in addition $I_0\not\equiv0$, but possibly $T_0\equiv0$. Actually, to simplify matters, most fo the time we will look at the case $T_0\equiv0$, not that it changes much in the proofs.

\subsection{Organization of the paper}
In the next section, we will first analyse systems \eqref{eSIR} to have sound benchmarks that will be useful to discuss the effects of the road. Then, the remaining of the paper will be devoted to the study of \eqref{e1.1}. We will start by discussing the stationary limiting state and see whether the epidemic spreads or not in Subsection~\ref{sec:SIRT}. There we will also derive the asymptotic speed of propagation for the spatial spread of the epidemic. In Section~\ref{sec:effects}, we discuss how the presence of the road affects the distribution of the total number of infected per location.  Section~\ref{sec:proofs} is devoted to the mathematical proofs of these various results. These
depend on the various parameters and we discuss their influence in Section~\ref{sec:parameters}.


\section{Analysis of the models}\label{sec:analysis}

We are going to compare the behaviour of \eqref{e1.1} to that of the classical $SIR$ model in the whole plane, with no diffusion of the susceptible population, that is~\eqref{eSIR}. 
The proofs of the results (in a slightly broader framework)
are presented in Section~\ref{sec:proofSIR}.


\subsection{Benchmark: the $SIR$ model}\label{sec:SIR}

System \eqref{eSIR} can be reduced to the classical Fisher-KPP equation by noticing that $S(t,X)$ is easily computed from $I(t,X)$:
\begin{equation}
\label{e2.2000}
\mathrm{ln}\biggl(\frac{S(t,X)}{S_0}\biggl)=-\beta v(t,x),\ \ v(t,x)=\int_0^tI(s,X)ds.
\end{equation}
Thus, the function 
$v(t,X)$ satisfies the equation
\begin{equation}
\label{e2.2}
v_t-d\Delta v=f(v)+I_0(X)\qquad(t>0,\ X\in\RR^2)\\
\end{equation}
with
\begin{equation*}
\label{f(u)}
f(v):=S_0(1-e^{-\beta v})-\alpha v,
\end{equation*}
together with the initial condition
\begin{equation}
\label{v0}
v(0,X)\equiv0 \qquad(X\in\RR^2).
\end{equation}
Transform \eqref{e2.2000} is a well-known particular case of a broader class (see for instance Aronson~\cite{A}, 
Diekmann~\cite{Diekmann-78}) that reduces the $SIR$ model with nonlocal interactions, and with no diffusion on the susceptible individuals, to 
nonlinear integral equations. In this particular case, we retrieve a parabolic equation. Pulsating waves and spreading speeds for periodic $S_0$ are studied for \eqref{eSIR} by Ducrot-Giletti~\cite{DG-SIR}. The integral equation resulting from the model with nonlocal interactions is studied by Ducasse~\cite{Duc}.

The nonlinearity $f$ is concave and vanishes at $0$, it is then of the 
KPP type. Only the presence of the ``source'' term $I_0$ differs 
from the standard Fisher-KPP equation. 
However, being $I_0$ compactly supported, the dynamics of the equation is still governed 
by the sign of $f'(0)$. Using a notation commonly employed in the literature, 
we write
$$f'(0)=\alpha(R_0-1),\quad\text{where }\;R_0:=\frac{S_0\beta}\alpha.$$
The quantity $R_0$ can be viewed as the classical {\em basic reproduction number},
see for instance~\cite{Ka} and the discussion on such number and its
interpretation in~\cite{Diekmann-R0}.
This parameter plays a crucial role and is widely mentioned 
by experts, decision makers and media for the analysis of the present COVID-19 pandemic.   
The sign of $f'(0)$ is determined by the position of $R_0$ with respect to
the threshold value $R_0=1$.
The presence of $I_0$ in~\eqref{e2.2} prevents the existence of constant steady states,
making the dynamics of the equation more complex. Nevertheless, the following  
{\em Liouville-type} result holds.
\begin{Th}\label{thm:Liouville}
	The equation in~\eqref{e2.2} admits a unique positive, bounded, stationary solution~$v_\infty(X)$.
	Moreover, $v_\infty$ satisfies
	\[\lim_{|X|\to\infty}v_\infty(X)=
	\begin{cases} 
	0 & \text{if }\,R_0\leq1\\
	v_* &\text{if }\,R_0>1,
	\end{cases}
	\]
	where $v_*$ is the unique positive zero of $f$.
\end{Th}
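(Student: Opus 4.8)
The plan is to study the elliptic problem underlying \eqref{e2.2}, namely
\begin{equation*}
-d\Delta v = f(v) + I_0(X), \qquad X \in \RR^2,
\end{equation*}
and to exploit three structural features of $f$: it vanishes at $0$, it is strictly concave, and $f(v)\to-\infty$ as $v\to+\infty$. Concavity together with $f'(0)=\alpha(R_0-1)$ shows that $f<0$ on $(0,+\infty)$ when $R_0\leq1$, whereas $f>0$ on $(0,v_*)$ and $f<0$ on $(v_*,+\infty)$ when $R_0>1$; in the latter case one moreover has $f'(v_*)<0$, so that $v_*$ is a \emph{stable} zero.

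\emph{Existence.} I would produce a solution as the long-time limit of \eqref{e2.2}--\eqref{v0}. The function $v\equiv0$ is a subsolution of the stationary equation, strict since $I_0\not\equiv0$, and, because $f(v)+\norm{I_0}_\infty\to-\infty$, every sufficiently large constant $M$ is a supersolution. Hence the solution of \eqref{e2.2}--\eqref{v0} is nondecreasing in $t$ and trapped in $[0,M]$; by parabolic estimates it converges, locally uniformly, to a bounded stationary solution $\u v$. This $\u v$ is the \emph{minimal} positive solution, since the monotone iteration started at $0$ lies below any positive solution. The strong maximum principle, together with $I_0\geq0$ and $I_0\not\equiv0$, gives $\u v>0$ everywhere.

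\emph{Behaviour at infinity.} The two regimes are treated separately. For $R_0<1$, concavity gives $f(v)\leq f'(0)v=-c\,v$ with $c=\alpha(1-R_0)>0$; outside a ball containing $\supp I_0$ the solution is therefore a subsolution of $-d\Delta+c$, and comparison with a radial barrier decaying at infinity yields $\u v(X)\to0$ exponentially. The critical case $R_0=1$ is handled with a quadratic barrier, using $f(v)\sim-\tfrac12 S_0\beta^2 v^2$ near $0$, which forces algebraic decay to $0$. For $R_0>1$ the target is $v_*$, and the delicate point is the lower bound: since $f'(0)>0$, the principal eigenvalue of $-d\Delta-f'(0)$ on a large ball is negative, so a small multiple of the principal eigenfunction is a compactly supported subsolution; sliding this ball shows $\u v\geq\delta>0$ uniformly. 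Once $\u v$ is bounded and bounded below, the stability of $v_*$ lets the strict sub/supersolutions $v_*\pm\eps$ of the homogeneous equation $-d\Delta v=f(v)$ sweep $\u v$ to $v_*$ at infinity.

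\emph{Uniqueness --- the main obstacle.} The decisive property is the strict sublinearity of the reaction: for each $X$ the map $v\mapsto\bigl(f(v)+I_0(X)\bigr)/v$ is strictly decreasing on $(0,+\infty)$, because $(1-e^{-\beta v})/v$ and $I_0(X)/v$ both decrease. Given two positive bounded solutions $v_1,v_2$, I would run the Brezis--Oswald / Picone argument: test the equation for $v_1$ with $(v_1^2-v_2^2)/v_1$ and that for $v_2$ with $(v_2^2-v_1^2)/v_2$, add, and integrate over $B_\rho$. The Picone identity makes the left-hand side nonnegative, while the right-hand side equals $\int(v_1^2-v_2^2)\bigl[\Phi(X,v_1)-\Phi(X,v_2)\bigr]$ with $\Phi(X,v)=(f(v)+I_0(X))/v$, which is nonpositive by the monotonicity of $\Phi$; equality then forces $v_1\equiv v_2$. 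The genuine difficulty, and the heaviest part of the proof, is that the domain is unbounded, so the boundary terms on $\partial B_\rho$ do not obviously vanish. This is exactly where the previous step is indispensable: both solutions share the same limit ($0$ or $v_*$) at infinity, so $v_1^2-v_2^2\to0$ on $\partial B_\rho$ while the gradients stay bounded by elliptic estimates, and the boundary contributions die as $\rho\to\infty$.
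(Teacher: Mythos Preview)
Your existence argument and the overall strategy for the limits at infinity are sound, though two points deserve tightening. First, the ``sweeping'' step for $R_0>1$ is telegraphic: the constants $v_*\pm\eps$ are sub/supersolutions only of the \emph{homogeneous} problem, so you cannot compare directly on the whole space; the clean way to make this precise is the paper's route---translate along a diverging sequence $X_n$, pass to a limit $\tilde v$ by elliptic estimates, and use the Liouville property for bounded entire solutions of $-d\Delta v=f(v)$ (hair-trigger when $R_0>1$, $f<0$ on $(0,\infty)$ when $R_0\leq1$). Second, you prove the limit at infinity only for the \emph{minimal} solution $\u v$, but the uniqueness step requires it for \emph{every} positive bounded solution; the same arguments do carry over, but this should be said.

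The genuine gap is in the uniqueness argument. Your Picone/Brezis--Oswald computation on $B_\rho$ leaves boundary terms of the form
\[
\int_{\partial B_\rho} d\,\partial_\nu v_i\cdot\frac{v_1^2-v_2^2}{v_i}\,dS,
\]
and since $|\partial B_\rho|\sim\rho^{N-1}$, the qualitative fact that $v_1-v_2\to0$ is \emph{not} enough to kill these as $\rho\to\infty$; you need a decay \emph{rate}. For $R_0>1$ you have established no rate at all for $v_i-v_*$, and for $R_0=1$ the algebraic decay you sketch must beat $\rho^{-(N-1)}$, which is not checked. A related difficulty appears when $R_0\leq1$: the test functions $(v_1^2-v_2^2)/v_i$ are only usable if $v_1/v_2$ stays bounded, and since both solutions tend to zero this is a Harnack-type statement that does not follow from what you have. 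All of this is repairable (exponential decay of $v_i-v_*$ from $f'(v_*)<0$ via barriers; matching rates for $R_0<1$), but it is substantial extra work absent from the proposal.

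The paper sidesteps the whole issue by using a pointwise ratio argument instead of an integral identity: set $k:=\sup(v_1/v_2)$ and observe that, because both solutions share a common \emph{positive} limit when $R_0>1$ (or after the shift $v_2\mapsto v_2+\eps$ when $R_0\leq1$), this supremum is \emph{attained} at an interior point $\bar X$; evaluating the equation there and using concavity in the form $f(ks)<kf(s)$ for $k>1$, $s>0$ yields a contradiction. This exploits the same sublinearity you identified, but only at a single point, so no boundary terms appear and no decay rate is needed.
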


The proof of this result, as well as the others in this section, are presented in 
Section~\ref{sec:proofSIR}. They can also be found in Ducrot-Giletti~\cite{DG-SIR},
at least for the case $R_0>1$,
in the more general framework of periodic coefficients $\alpha(X),\beta(X)$ and distribution $S_0(X)$.
Some additional qualitative properties of~$v_\infty$ are contained in Theorem~\ref{thm:further} below.
Namely, $v_\infty$ is radially decreasing outside the support of $I_0$ and it has the shape of a {\em bump} above 
the value $0$ or $v_*$.

It turns out that the steady state $v_\infty$ is a global attractor for the dynamics 
of~\eqref{e2.2}.

\begin{Th}\label{thm:ltb}
	The solution $v(t,X)$ to~\eqref{e2.2}-\eqref{v0} converges locally uniformly to 
	$v_\infty(X)$ as $t\to+\infty$.
\end{Th}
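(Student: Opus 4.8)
The plan is to exploit the concave (KPP) structure of $f$ together with the monotonicity induced by the nonnegative, time-independent source $I_0$, and then to invoke the uniqueness part of Theorem~\ref{thm:Liouville} to identify the limit. First I would establish, purely by comparison, that $v$ is nonnegative, dominated by $v_\infty$, and nondecreasing in time. Since $f$ is smooth, hence Lipschitz on any bounded interval, the parabolic comparison principle applies to ordered bounded sub/supersolutions of the equation in \eqref{e2.2}. The constant $0$ is a stationary subsolution ($-d\Delta 0-f(0)=0\leq I_0$) matching the datum \eqref{v0}, whence $v\geq0$. Viewing $v_\infty$ as a $t$-independent solution with $v(0,\cdot)=0\leq v_\infty$ gives $v(t,\cdot)\leq v_\infty$ for all $t$. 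For the time-monotonicity I would compare $v$ with its translate $v^h(t,X):=v(t+h,X)$, $h>0$: both solve the autonomous equation \eqref{e2.2} and $v^h(0,\cdot)=v(h,\cdot)\geq0=v(0,\cdot)$, so $v(t+h,X)\geq v(t,X)$. Hence $t\mapsto v(t,X)$ is nondecreasing and bounded above by $v_\infty(X)$, and the pointwise limit $\overline v(X):=\lim_{t\to\infty}v(t,X)$ exists with $0\leq\overline v\leq v_\infty$.

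The next step is to show that $\overline v$ is a bounded stationary solution. Because $\|v(t,\cdot)\|_{L^\infty}\leq\|v_\infty\|_{L^\infty}$ uniformly in $t$, interior parabolic $L^p$ and Schauder estimates bound $v$ in $C^{1+\gamma/2,\,2+\gamma}_{loc}$ on $[1,\infty)\times\RR^2$. Applying these to the translates $v_n(s,X):=v(t_n+s,X)$, $s\in[0,1]$, with $t_n\to\infty$, an Arzel\`a--Ascoli/diagonal argument extracts a limit solving \eqref{e2.2}; the time-monotonicity forces this limit to be $s$-independent and equal to $\overline v$, so that $-d\Delta\overline v=f(\overline v)+I_0$ on $\RR^2$. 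In particular $\overline v\not\equiv0$, for otherwise $I_0=-d\Delta\overline v-f(\overline v)=0$, contradicting $I_0\not\equiv0$.

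It then remains to identify $\overline v$ with $v_\infty$. Writing $f(\overline v)=g(X)\overline v$ with $g(X):=f(\overline v(X))/\overline v(X)$ (and $g:=f'(0)$ where $\overline v=0$), the bound on $f'$ makes $g$ bounded, and the stationary equation reads $-d\Delta\overline v-g\,\overline v=I_0\geq0$. The strong maximum principle then rules out an interior zero of the nonnegative, nontrivial $\overline v$, giving $\overline v>0$ throughout $\RR^2$. Thus $\overline v$ is a positive, bounded, stationary solution, and the uniqueness in Theorem~\ref{thm:Liouville} forces $\overline v=v_\infty$. Finally, since $v(t,\cdot)$ increases pointwise to $v_\infty$ with all functions continuous, Dini's theorem upgrades the convergence to uniform convergence on compact sets, which is the stated locally uniform convergence.

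I expect the main obstacle to be the second step: turning the monotone pointwise limit into a genuine classical stationary solution. The delicate points are the uniform-in-time interior estimates (which are available precisely because the domain is all of $\RR^2$, so no boundary issues arise, and the $L^\infty$ bound is uniform) and the argument that the extracted limit of the time-translates is stationary. By contrast, the three comparison bounds of the first step and the concluding positivity/uniqueness/Dini arguments are comparatively routine once Theorem~\ref{thm:Liouville} is granted.
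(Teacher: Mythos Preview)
Your proof is correct and follows essentially the same route as the paper: monotonicity of the solution via the comparison principle (the initial datum $0$ being a subsolution), parabolic estimates to pass to a stationary limit, and the Liouville-type uniqueness of Theorem~\ref{thm:Liouville} to identify it. The only cosmetic difference is that the paper sandwiches $v$ between the increasing solution from $0$ and a \emph{decreasing} solution issued from a large constant $s$ with $f(s)+\max I_0<0$, which lets it treat general nonnegative compactly supported initial data in one stroke; for the specific datum~\eqref{v0} your upper barrier $v_\infty$ and the Dini argument do the job just as well.
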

Since the above convergence holds true for the time derivatives (see the proof in Section~\ref{sec:proofSIR}),
a first consequence we derive is: 
$$I(t,X)=\partial_t v(t,X)\to0\quad \text{as }t\to+\infty,$$
that is, the number of infected individuals drops to $0$  asymptotically in time. 

One can also read Theorem~\ref{thm:ltb} in terms of number of
remaining susceptibles at time $t$ and position $X$, which is 
$S(t,X)=S_0e^{-\beta v(t,X)}$. 
Hence, the amount of people that will be infected by
the virus at a given place $X$, throughout the whole course of the epidemic, is given by
$$I_{tot}(X)=S_0\big(1-e^{-\beta v_\infty(X)}\big).$$ 
We point ou that the fact that this function is not constant is a consequence 
of the hypothesis that susceptibles do not diffuse.
Otherwise the diffusion tends to ``flatten'' the density $S$, and this mechanism occurs  
for rather general systems, see e.g.~\cite[Theorem~4.3]{BNR-SIR}.
For this reason, the description of the function $v_\infty$ is extremely important in 
the study of the epidemic.
With this regard, Theorem~\ref{thm:Liouville} leads to the following crucial 
dichotomy for the total amount of infected people far from the epicentre:
$$
\lim_{|X|\to\infty}I_{tot}(X)=\begin{cases} 
0 & \text{if }R_0\leq1\\
S_0\big(1-e^{-\beta v_*}\big) & \text{if }R_0>1.
\end{cases}
$$
%
%
This corresponds to two opposite scenarios.
If $R_0\leq1$, the epidemic does not propagate and areas very far from 
the initial outburst (the support of $I_0$) will be essentially not infected.
Conversely, if $R_0>1$, the epidemic propagates across the territory, and, even though
its impact will be weaker on places far from the epicentre,
the total infected people there will be a portion $1-e^{-\beta v_*}$
of the overall population.

What is important to determine in the case $R_0>1$ is the speed at which the
epidemic spreads. This is provided by the following.

\begin{Th}\label{thm:speed}
	
	
	Assume that $R_0>1$. Call
	\begin{equation}
	\label{e2.3}
	\cSIR:=2\sqrt{d\alpha(R_0-1)}.
	\end{equation}
	Then, for all $\e\in(0,\cSIR)$, 
	the solution $v(t,X)$ to~\eqref{e2.2}-\eqref{v0} satisfies
	$$\lim_{t\to+\infty}\Big(\max_{\vert X\vert\leq(\cSIR-\e)t}|v(t,X)-v_\infty(X)|\Big)=0,
	$$
	$$\lim_{t\to+\infty}\Big(\max_{\vert X\vert\geq(\cSIR+\e)t}v(t,X)\Big)=0.
	$$
	
\end{Th}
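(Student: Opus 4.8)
The plan is to establish Theorem~\ref{thm:speed} by a standard KPP-type argument combining an upper barrier (to control the spreading from above) with a lower barrier built from compactly supported subsolutions (to force the invasion from below), and then to combine the lower bound with the attractor result of Theorem~\ref{thm:ltb}. Since $f$ is of KPP type with $f'(0)=\alpha(R_0-1)>0$ and $f(v)\leq f'(0)v$ by concavity, the linearized equation $w_t-d\Delta w=f'(0)w$ governs the leading edge, and its associated Freidlin--Gärtner / Aronson--Weinberger speed is precisely $c_{\scalebox{0.5}{$SIR$}}=2\sqrt{d\,f'(0)}=2\sqrt{d\alpha(R_0-1)}$.

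\textbf{Upper bound (no spreading beyond $c_{\scalebox{0.5}{$SIR$}}$).} First I would prove the second limit. Because $I_0$ is compactly supported, I would pick $R>0$ with $\supp I_0\subset B_R$ and construct a supersolution of the form
\[
\overline{v}(t,X)=A\,e^{-\lambda(|X|-ct)},\qquad c=\cSIR+\tfrac\e2,
\]
for a suitable decay rate $\lambda>0$ and amplitude $A$. Using $f(v)\leq f'(0)v$, one checks that $\overline v$ is a supersolution of $v_t-d\Delta v\leq f'(0)v+I_0$ outside $B_R$ once $A$ is large enough to dominate on $\partial B_R$ and to absorb the source term $I_0$ (whose support is compact), and once $\lambda$ is chosen so that $d\lambda^2-c\lambda+f'(0)\leq0$, which is solvable exactly when $c\geq\cSIR$. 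The spherical Laplacian correction $-d\lambda\frac{N-1}{|X|}$ has a favourable sign for a radial exponential that decreases in $|X|$, so it only helps. Comparison then gives $v(t,X)\leq\overline v(t,X)$, and on the region $|X|\geq(\cSIR+\e)t$ one has $|X|-ct\geq\frac\e2 t\to\infty$, forcing $\max_{|X|\geq(\cSIR+\e)t}v(t,X)\to0$.

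\textbf{Lower bound (full invasion inside $c_{\scalebox{0.5}{$SIR$}}$).} For the first limit I would show that in every direction the front travels at speed at least $\cSIR-\e$, and that behind the front $v$ approaches $v_\infty$. The directional lower bound is the heart of the matter: I would use the classical construction of a compactly supported, moving subsolution of the linearized problem, $\underline v(t,X)=\eta\,e^{-\gamma(x\cdot n-c't)}\phi(X)$ with $c'=\cSIR-\frac\e2$ and $\phi$ a suitably truncated/oscillating profile (the Aronson--Weinberger travelling sub-front), valid while $v$ remains small so that $f(v)\geq (f'(0)-\delta)v$. Since $I_0\not\equiv0$ provides a strictly positive seed at $t=0$, the parabolic comparison principle makes $v$ exceed a fixed positive level in an expanding ball of radius $(\cSIR-\frac{3\e}{4})t$. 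Finally, I would upgrade this ``getting large'' statement to convergence to $v_\infty$: on the set $|X|\leq(\cSIR-\e)t$ I would invoke Theorem~\ref{thm:ltb} (local uniform convergence $v(t,\cdot)\to v_\infty$) together with a space-time translation/compactness argument — sliding the origin to any point $X_t$ with $|X_t|\leq(\cSIR-\e)t$ and using parabolic estimates to pass to the limit — so that the whole expanding ball is captured uniformly.

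\textbf{Main obstacle.} The delicate step is the lower bound, specifically making the local convergence of Theorem~\ref{thm:ltb} uniform over the \emph{moving} region $|X|\leq(\cSIR-\e)t$ rather than on fixed compact sets. The difficulty is that $v_\infty$ is not constant (it is the bump-shaped Liouville solution), so one cannot simply compare with a constant state behind the front; I expect to handle this by a zero-number / sliding argument or by comparison with shifted sub- and supersolutions trapping $v$ between two copies of $v_\infty$ up to a small error, exploiting the stability of $v_\infty$ as a global attractor. Propagating the $\e$-gap carefully through the three speeds $\cSIR-\e<\cSIR-\frac{3\e}{4}<\cSIR-\frac\e2$ to absorb all error terms is routine but must be done consistently.
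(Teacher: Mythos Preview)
Your upper bound argument for the second limit is essentially the same as the paper's (the paper uses directional exponentials $ke^{-\lambda(X\cdot e-\cSIR t)}$ with $\lambda=\cSIR/(2d)$ and then minimises over $e\in\Sph$, which avoids the curvature term altogether, but your radial version works too).

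For the first limit, however, you are making life harder than necessary, and the ``main obstacle'' you identify dissolves with one observation you did not use: by Theorem~\ref{thm:Liouville}, $v_\infty(X)\to v_*$ as $|X|\to\infty$. The paper exploits this as follows. Take sequences $t_n\to+\infty$ and $|X_n|\leq(\cSIR-\e)t_n$. If $(X_n)$ is bounded, Theorem~\ref{thm:ltb} gives the conclusion directly. If $(X_n)$ diverges, then $v_\infty(X_n)\to v_*$, so it suffices to show $v(t_n,X_n)\to v_*$. The lower bound $\liminf v(t_n,X_n)\geq v_*$ is immediate because $v$ is a supersolution of the standard Fisher--KPP equation~\eqref{KPPstandard}, and one simply quotes Aronson--Weinberger for that equation. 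The upper bound uses a \emph{stationary} barrier: $v_*+ke^{-\sqrt{-f'(v_*)/d}\,|X|}$ is a supersolution of~\eqref{e2.2} outside $\supp I_0$ (this is the same function as in the proof of Theorem~\ref{thm:further}$(ii)$), and comparison yields $v(t,X)\leq v_*+ke^{-\sqrt{-f'(v_*)/d}\,|X|}$ for all $t$, hence $\limsup v(t_n,X_n)\leq v_*$.

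So there is no need to construct moving compactly supported subsolutions from scratch, nor to run a translation/compactness argument to upgrade Theorem~\ref{thm:ltb} to moving sets. The point you flagged as delicate --- that $v_\infty$ is not constant --- is handled simply by the fact that $v_\infty$ is asymptotically constant, which reduces the moving-region statement to a statement about the constant~$v_*$. Your route would work in principle, but the sliding/compactness step you sketch is exactly where you would rediscover this reduction anyway (the translated limit of $v_\infty$ is $v_*$, and the translated limit equation has no source term), so you may as well use it from the start.
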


The quantity~$\cSIR$ is the {\em asymptotic speed of spreading}
of the epidemic wave. 
It coincides from one hand with 
the speed $2\sqrt{df'(0)}$ of the standard Fisher-KPP equation, and from the other
with the minimal speed of the waves for the $SIR$ model~\eqref{eSIR}
obtained by K\"{a}ll\'{e}n~\cite{Ka}. 

A word must be said about the proof, presented in Section~\ref{sec:proofSIR}.
It is essentially a direct consequence of Theorem~\ref{thm:ltb} above and
Aronson-Weinberger \cite{AW}. Indeed, the former describes the behaviour of the solution 
in compact regions, while the latter provides the estimates far from the origin,
where the influence of $I_0$ is negligible.

\subsection{Dynamics of the model in the presence of the line}\label{sec:SIRT}

\noindent It occurs that the very same transform works for the model with the line~\eqref{e1.1}:
$$
u(t,x):=\int_0^t\TI(s,x)ds,\qquad v(t,x,y):=\int_0^tI(t,x,y)ds.
$$
The system for $(u,v)$ is
\begin{equation}
\label{e2.4}
\left\{ \begin{array}{ll} \partial_t u-D \partial_{xx} u=  \nu v(t,x,0)-\mu
	u+T_0(x) & (t>0,\ x\in \mathbb {R})\\ \partial_t v-d\varDelta v=f(v)+I_0(x,y) & (t>0,\ x\in
	\mathbb {R},\ y>0)\\ -d\partial_y v(t,x,0)=\mu u(t,x)- \nu v(t,x,0) & (t>0,\ x\in \mathbb {R}).
\end{array} \right.
\end{equation}
with, as before, 
$f(v):=S_0(1-e^{-\beta v})-\alpha v$.
The initial datum is 
\begin{equation}\label{u0v0}
(u(0,\cdot),v(0,\cdot))\equiv(0,0).
\end{equation}

This is the same system studied in~\cite{BRR2}, except for the ``source'' terms $I_0,T_0$.
We recall that these are assumed here to be nonnegative and compactly supported, with 
$I_0\not\equiv0$ (and typically $T_0\equiv0$).

We start with the Liouville-type result, analogous to Theorem~\ref{thm:Liouville}.

\begin{Th}\label{thm:Liouville-SIRT}
	System~\eqref{e2.4} admits a unique positive, bounded, stationary 
	solution $(u_\infty^r,v_\infty^r)$.
	Such solution satisfies
	\[
	\lim_{|x|\to\infty}\big(u_\infty^r(x),v_\infty^r(x,y)\big)=
	\begin{cases} 
	(0,0) & \text{if }R_0\leq1\\
	\big(\frac\nu\mu,1\big)v_* & \text{if }R_0>1,
	\end{cases}
	\quad\text{uniformly in }y\geq0,
	\]	
	\[
	\lim_{y\to+\infty}v_\infty^r(x,y)=
	\begin{cases} 
	0 & \text{if }R_0\leq1\\
	v_* & \text{if }R_0>1,
	\end{cases}
	\quad\text{uniformly in }x\in\RR,
	\]
	where $v_*$ is the unique positive zero of $f$.
\end{Th}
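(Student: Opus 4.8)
The plan is to mirror the three-step structure of the scalar result (Theorem~\ref{thm:Liouville})---\emph{existence}, \emph{uniqueness}, and \emph{identification of the far-field limits}---but now for the coupled road/field operator of~\cite{BRR2}, whose comparison and strong maximum principles I would take as known. Two structural features drive everything. First, the system~\eqref{e2.4} is cooperative: the exchange terms $\nu v(\cdot,0)$ and $\mu u$ carry the signs that make the maximum principle applicable to the pair $(u,v)$. Second, $f$ is strictly sub-homogeneous, i.e.\ $s\mapsto f(s)/s$ is strictly decreasing on $(0,\infty)$ (a restatement of the concavity of $f$ with $f(0)=0$); this is the standard mechanism forcing uniqueness of positive solutions. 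I would also record that, as in~\cite{BRR2}, the presence of the line does \emph{not} move the invasion threshold: instability of $(0,0)$ for the homogeneous system is governed by the sign of $f'(0)=\alpha(R_0-1)$, exactly as in the field alone.

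For \emph{existence} I would exhibit ordered sub- and supersolutions and pass to a solution by monotone iteration. The pair $(0,0)$ is a subsolution---strict wherever $I_0>0$---because $I_0,T_0\geq0$. In the typical case $T_0\equiv0$, the constant $\big(\tfrac\nu\mu\,\bar v,\bar v\big)$ is a supersolution once $\bar v$ is large enough that $f(\bar v)+\norm{I_0}_\infty\leq0$: the two line relations then hold with equality and the field inequality holds since $f(\bar v)\to-\infty$. A general compactly supported $T_0$ is absorbed by adding a bounded, exponentially decaying correction solving the decoupled road equation, without affecting the argument. Comparison yields a stationary solution $(u_\infty^r,v_\infty^r)$ with $0<v_\infty^r\leq\bar v$, the positivity coming from the strong maximum principle together with $I_0\not\equiv0$.

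\emph{Uniqueness}---and, en route, the Liouville classification of the homogeneous system---rests on the sub-homogeneity. For the homogeneous problem, comparing any bounded positive solution with the \emph{constant} plateau $\big(\tfrac\nu\mu\,v_*,v_*\big)$ via the quantity $\tau^*:=\sup\{\tau\in(0,1]:\tau(\text{solution})\leq\text{plateau}\}$ forces $\tau^*=1$ in both directions: if $\tau^*<1$, the strict inequality $f(\tau^* s)>\tau^* f(s)$ makes $\tau^*(\text{solution})$ a strict subsolution and contradicts the strong maximum principle at a contact point, while comparing with a constant prevents the contact point from escaping to infinity. This shows that $(0,0)$ and $\big(\tfrac\nu\mu\,v_*,v_*\big)$ are the only bounded nonnegative stationary states of the homogeneous system. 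For the inhomogeneous problem the same sliding argument applies once two positive bounded solutions are known to share the same limits at infinity, so that the contact point again cannot escape; the source terms $I_0,T_0\geq0$ only reinforce the relevant inequalities.

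The \emph{limits} are obtained by a translation--compactness argument, and this is where the main difficulty lies. Taking $x_n\to\pm\infty$ and using interior elliptic estimates, a subsequence of the translates $\big(u_\infty^r(\cdot+x_n),v_\infty^r(\cdot+x_n,\cdot)\big)$ converges in $C^2_{loc}$ to a bounded nonnegative solution of the \emph{homogeneous} system, hence to $(0,0)$ or the plateau by the classification above. When $R_0\leq1$ one has $f\leq0$, so a comparison with an exponentially decaying supersolution built from $f'(0)\leq0$ pins the limit at $(0,0)$; the analogous translation $y_n\to+\infty$ kills the line condition, leaves the pure field equation $-d\Delta w=f(w)$ in the whole plane, and gives $w\equiv0$. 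When $R_0>1$ the trivial limit must be excluded: here I would propagate a lower bound from the source region, which amounts to the positivity of the generalized principal eigenvalue of the linearization at $(0,0)$ when $f'(0)>0$, thereby selecting the plateau; the $y$-limit is then handled by the field Liouville result of Theorem~\ref{thm:Liouville}, giving $w\equiv v_*$. The hard part is precisely this exclusion, together with upgrading $C^2_{loc}$ convergence to the stated limits \emph{uniform in $y\geq0$} (resp.\ in $x$); for this I would sandwich $v_\infty^r$ outside $\supp(I_0,T_0)$ between sub- and supersolutions whose far-field values already coincide with the claimed limit, exploiting monotonicity in $|x|$ to make the convergence uniform.
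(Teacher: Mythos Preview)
Your overall architecture---existence via ordered barriers, homogeneous Liouville, translation--compactness for limits, then uniqueness---matches the paper's, and the ingredients you name (cooperativity, strong maximum principle for the road/field operator, concavity of $f$) are exactly the right ones. There is, however, a genuine gap in your treatment of the case $R_0\le1$, and a couple of places where the paper's implementation is cleaner.

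\textbf{The gap: uniqueness when $R_0\le1$.} Your sliding argument is a ratio argument: you compare a positive solution with the constant plateau $\big(\tfrac\nu\mu v_*,v_*\big)$ and invoke strict sub-homogeneity at a contact point. But when $R_0\le1$ there is no positive plateau to slide against, so the homogeneous Liouville does not follow from what you wrote. Worse, for the inhomogeneous uniqueness you say the contact point ``again cannot escape'' once two solutions share the same limit; this is false for the ratio when that common limit is $(0,0)$, since $\sup u_1/u_2$ may be infinite or unattained if the two solutions decay at different exponential rates. The paper resolves this with a \emph{difference}-based argument: it sets
\[
h:=\max\Big(\tfrac{\mu}{\nu}\sup_{\RR}(u_1-u_2),\ \sup_{\RR\times\RR^+}(v_1-v_2)\Big),
\]
which is automatically finite, takes a maximizing sequence, translates, and obtains a contradiction from the \emph{strict monotonicity} of $f$ on $\RR^+$ (available precisely because $f'(0)\le0$). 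This step simultaneously yields the homogeneous Liouville for $R_0\le1$ (by allowing $I_0\equiv0$). Your sketch does not supply a substitute for this; sub-homogeneity alone is not enough once both solutions vanish at infinity.

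\textbf{Supersolution.} Your constant pair $\big(\tfrac\nu\mu\bar v,\bar v\big)$ satisfies the two line relations with \emph{equality}, so it fails to dominate a nontrivial $T_0$ without the patch you allude to. The paper uses instead
\[
\overline u=K\,\frac{2\nu+d\sigma}{\mu},\qquad \overline v=K\big(1+e^{-\sigma y}\big),\quad 0<\sigma<\sqrt{\alpha/d},
\]
which gives strict inequalities on the line (absorbing $T_0$ directly) and is reused later as the ambient cap in the proof of Theorem~\ref{thm:decay}.

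\textbf{Excluding the trivial limit when $R_0>1$.} Rather than appealing to a generalized principal eigenvalue, the paper obtains the global lower bound $(u_\infty^r,v_\infty^r)\ge\big(\tfrac\nu\mu,1\big)v_*$ in one stroke: the stationary solution is a supersolution of the homogeneous system, so \cite[Theorem~4.1]{BRR2} (convergence to the plateau) applied with it as initial datum gives the inequality everywhere. This immediately rules out the zero limit along any diverging sequence and also makes the $y\to+\infty$ limit a one-line consequence of the scalar Liouville. Your eigenvalue route would work, but it is more circuitous here. With the lower bound in hand, the paper's uniqueness for $R_0>1$ is the ratio argument you describe, with Hopf's lemma handling a boundary contact; the limits at infinity guarantee the sup is attained.
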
	
The long-time behaviour for~\eqref{e2.4}
is described by the following result, which is the counterpart of Theorem~\ref{thm:ltb}
about the model without the line.

\begin{Th}\label{thm:ltb-SIRT}
	The solution $(u,v)$ to~\eqref{e2.4}-\eqref{u0v0}
	converges to $(u_\infty^r,v_\infty^r)$
	as $t\to+\infty$, locally uniformly in $x\in\RR$, $y\geq0$.
\end{Th}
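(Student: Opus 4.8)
The plan is to exploit the monotone, order-preserving structure of the road--field system \eqref{e2.4} and to reduce the convergence to the uniqueness statement already contained in Theorem~\ref{thm:Liouville-SIRT}. The first ingredient is the parabolic comparison principle for \eqref{e2.4}, which holds as in \cite{BRR2}: the system is cooperative (raising the trace $v(t,x,0)$ increases the source of $u$, and raising $u$ increases the influx of $v$ through the boundary condition), and the addition of the fixed nonnegative sources $I_0,T_0$ does not affect the order-preserving property. From this I first deduce nonnegativity of $(u,v)$, since $(0,0)$ is a subsolution of \eqref{e2.4}.

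Next I would prove that $(u,v)(t,\cdot)$ is nondecreasing in $t$. Here the key point is that \eqref{e2.4} is \emph{autonomous}. Fixing $\tau>0$ and setting $z(t):=(u,v)(t,\cdot)$ and $w(t):=(u,v)(t+\tau,\cdot)$, both are solutions of \eqref{e2.4}; at $t=0$ one has $w(0)=(u,v)(\tau,\cdot)\geq(0,0)=z(0)$ by nonnegativity, so the comparison principle yields $w(t)\geq z(t)$ for all $t\geq0$. As $\tau>0$ is arbitrary, $(u,v)$ is nondecreasing in time. For the upper bound I would simply compare the solution with the stationary solution $(u_\infty^r,v_\infty^r)$ furnished by Theorem~\ref{thm:Liouville-SIRT}: being a steady state it is in particular a supersolution, and it dominates the initial datum $(0,0)$, whence $(u,v)(t,\cdot)\leq(u_\infty^r,v_\infty^r)$ for all $t$.

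Monotonicity and boundedness then yield a pointwise limit $(\tilde u,\tilde v)\leq(u_\infty^r,v_\infty^r)$ as $t\to+\infty$. To identify it, I would apply interior and boundary parabolic (Schauder) estimates to the time translates $(u,v)(\cdot+n,\cdot)$, which are uniformly bounded and solve the same autonomous system with the same sources; a subsequence converges in $C^{2,1}_{loc}$ to an entire-in-time solution that, by monotonicity, is independent of $t$ and coincides with $(\tilde u,\tilde v)$. Thus $(\tilde u,\tilde v)$ is a bounded stationary solution of \eqref{e2.4}, and passing to the limit is legitimate, including in the boundary condition. It remains positive: since $I_0\not\equiv0$ and $f(0)=0$, the strong maximum principle gives $v(t,\cdot)>0$ for $t>0$, hence $\tilde v\geq v(1,\cdot)>0$, and then the equation for $\tilde u$ forces $\tilde u>0$ as well. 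By the uniqueness part of Theorem~\ref{thm:Liouville-SIRT}, $(\tilde u,\tilde v)=(u_\infty^r,v_\infty^r)$. Finally, since the convergence is monotone and the limit is continuous, Dini's theorem upgrades the pointwise convergence to locally uniform convergence in $x\in\RR$, $y\geq0$, which is the assertion.

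I expect the main technical obstacle to be the compactness/identification step: one must secure parabolic estimates \emph{up to the line} $\{y=0\}$ for the mixed coupling (a two-dimensional equation for $v$ with a Robin-type boundary condition feeding the one-dimensional equation for $u$), and check that the boundary relation and the coupling pass to the limit. Once the comparison principle and these boundary estimates are available --- both inherited from the analysis in \cite{BRR2} --- the remaining monotone-convergence argument is routine.
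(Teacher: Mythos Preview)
Your proposal is correct and follows essentially the same approach as the paper: use that $(0,0)$ is a subsolution (not a solution) to get monotonicity in time, compare with a stationary supersolution to get boundedness, pass to the limit via parabolic estimates, and invoke the Liouville result of Theorem~\ref{thm:Liouville-SIRT} to identify the limit. The only cosmetic difference is that the paper bounds $(u,v)$ from above by the explicit supersolution~\eqref{uv-supersolution} rather than by $(u_\infty^r,v_\infty^r)$ itself; either choice works.
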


At this stage, the picture is identical to the standard $SIR$ model described in the previous section.
The total amount of infected individuals after the passage of the 
epidemic wave,  at a given point $(x,y)$, is 
$$I_{tot}(x,y)=S_0\big(1-e^{-\beta v_\infty^r(x,y)}\big),$$ 
with $v_\infty^r$ exhibiting two qualitatively distinct behaviours depending on whether
$R_0\leq1$ or $R_0>1$. Namely, 
the total number of infected people very far from the epicentre of the epidemic is 
$$
\lim_{|(x,y)|\to\infty}I_{tot}(x,y)=\begin{cases} 
0 & \text{if }R_0\leq1\\
S_0\big(1-e^{-\beta v_*}\big) & \text{if }R_0>1,
\end{cases}
$$
which is the same as in the case without the line.

Next, we investigate the speed at which the epidemic spreads across the territory, along the line.
\begin{Th}\label{thm:speed-SIRT}
	Assume that $R_0>1$. Let $(u,v)$ be the solution to~\eqref{e2.4}-\eqref{u0v0}. 
	Then, there exists $\cSIRT>0$ such that, for all $\e\in(0,\cSIRT)$,
	$$\lim_{t\to+\infty}\Big(\max_{\vert x\vert\leq(\cSIRT-\e)t}\big|(u(t,x),v(t,x,y))-
	(u_\infty^r(x),v_\infty^r(x,y))\big|\Big)=0,
	$$
	$$\lim_{t\to+\infty}\Big(\max_{\vert x\vert\geq(\cSIRT+\e)t}\big|(u(t,x),v(t,x,y))\big|\Big)=0,
	$$
	locally uniformly with respect to $y\geq0$.
	
	In addition, the spreading speed $\cSIRT$ satisfies
	$$\cSIRT\begin{cases} = \cSIR & \text{if }D\leq2d\\
	> \cSIR & \text{if }D>2d.
	\end{cases}$$		
\end{Th}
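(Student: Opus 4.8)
The plan is to exploit that, after the transformation, system~\eqref{e2.4} is precisely the road/field model of~\cite{BRR2} with the KPP nonlinearity $f$ and the two compactly supported source terms $I_0,T_0$. The nonlinearity $f(v)=S_0(1-e^{-\beta v})-\alpha v$ is concave with $f(0)=0$ and $f'(0)=\alpha(R_0-1)>0$, hence it satisfies $f(v)\le f'(0)v$ for all $v\ge0$; this KPP structure is what makes the spreading speed linearly determined. As in the proof of Theorem~\ref{thm:speed}, the argument splits into an inner region, where Theorem~\ref{thm:ltb-SIRT} already yields convergence to the stationary state on bounded sets, and an outer region, where the source terms are negligible and the propagation is governed by the linearization. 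Throughout, the essential tool is the comparison principle for the cooperative road/field system, established in~\cite{BRR2}, which applies here since $I_0,T_0$ are nonnegative and do not affect monotonicity.

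For the outer estimate, I would use $f(v)\le f'(0)v$ to see that $(u,v)$ is a subsolution of the linear road/field system with reaction coefficient $f'(0)$ and the same source $(T_0,I_0)$. The solution of that linear problem grows in time, but its region of non-negligibility advances exactly at the linear spreading speed of the road/field operator, which I take as the definition of $\cSIRT$; since $I_0,T_0$ are compactly supported, ahead of this front the linear solution tends to $0$. By comparison this gives $\lim_{t\to\infty}\max_{|x|\ge(\cSIRT+\e)t}|(u(t,x),v(t,x,y))|=0$, locally uniformly in $y\ge0$. For the inner estimate, I would combine two ingredients: on one hand, the solution is bounded above by the stationary supersolution $(u_\infty^r,v_\infty^r)$ furnished by Theorem~\ref{thm:Liouville-SIRT}; on the other hand, the compactly supported invading subsolutions constructed in~\cite{BRR2}, which move at any speed below $\cSIRT$ and grow toward the positive plateau, force $(u,v)$ from below once the epidemic has been seeded by $I_0$. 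Squeezing between these and using Theorem~\ref{thm:ltb-SIRT} to identify the limit on bounded sets yields $\lim_{t\to\infty}\max_{|x|\le(\cSIRT-\e)t}|(u(t,x),v(t,x,y))-(u_\infty^r(x),v_\infty^r(x,y))|=0$. This establishes the existence of $\cSIRT>0$ together with the two convergence statements.

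The dichotomy for $\cSIRT$ then follows directly from the main result of~\cite{BRR2}. Since the spreading speed is linearly determined, $\cSIRT$ is the spreading speed of the linear road/field operator with field reaction $f'(0)$, field diffusion $d$, road diffusion $D$ and exchange rates $\mu,\nu$. The analysis in~\cite{BRR2} shows that this speed equals the Fisher-KPP field speed $2\sqrt{d f'(0)}$ when $D\le 2d$ and is strictly larger when $D>2d$. Recalling $f'(0)=\alpha(R_0-1)$, the field speed is exactly $2\sqrt{d\alpha(R_0-1)}=\cSIR$, which gives the claimed dichotomy $\cSIRT=\cSIR$ for $D\le 2d$ and $\cSIRT>\cSIR$ for $D>2d$.

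The main obstacle will be the inner estimate: transferring the spreading result of~\cite{BRR2}, which concerns invasion of a \emph{constant} steady state by a solution issued from compactly supported data, to the present situation where the limiting profile $(u_\infty^r,v_\infty^r)$ is a nonconstant plateau and the equation carries a persistent, though localized, source. The points to verify carefully are that the comparison principle tolerates the source terms and that the subsolutions of~\cite{BRR2} can be placed below $(u,v)$; the compact support of $I_0,T_0$ is what guarantees that neither the limiting speed nor the far-field plateau is altered.
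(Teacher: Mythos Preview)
Your plan is essentially the paper's own argument: inner estimate via Theorem~\ref{thm:ltb-SIRT} plus comparison with a solution of the homogeneous road/field system from~\cite{BRR2} placed below $(u,v)$ at a positive time, upper barrier $(u_\infty^r,v_\infty^r)$, and the dichotomy inherited verbatim from~\cite{BRR2}. So the strategy is correct and matches the paper.

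The one place where you diverge is the outer estimate. You propose to bound $(u,v)$ by the solution of the \emph{linear} road/field problem with the same sources and then assert that this linear solution vanishes ahead of the front. That is true, but it is an extra claim you still have to prove, and the natural way to prove it is precisely the plane-wave supersolution construction you are trying to avoid. The paper is more direct: it takes the exponential plane waves $k\,e^{-a(x-\cSIRT t)}(1,\gamma e^{-by})$ from~\cite{BRR2}, which are supersolutions of the \emph{nonlinear} system outside $\supp I_0\cup\supp T_0$ by the concavity of $f$, and then uses the global bound $(u,v)\le(u_\infty^r,v_\infty^r)$ (valid for all $t$) to choose $k$ so large that the plane wave dominates $(u,v)$ inside those supports as well. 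This is the step your write-up is missing: you need to explain how the supersolution handles the region where the sources live, and the uniform-in-time boundedness by the steady state is exactly what makes this work. Your route via the linear problem does not have such a uniform bound (the linear solution blows up), so completing it is actually slightly more delicate than the paper's direct argument.
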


The {\em asymptotic speed of spreading} $\cSIRT$ coincides with the one for the 
homogeneous model, i.e.~with $I_0,T_0\equiv0$, which is provided by 
\cite[Theorem~1.1]{BRR2}.
Form a mathematical point of view, this means that the presence of the compact 
perturbation does not affect the speed of propagation, as in the case of the $SIR$ model~\eqref{e2.2}.
The fact that the speed cannot decrease if one adds the perturbation is a straightforward consequence 
of the comparison principle. Instead, to derive the opposite inequality, we need to go 
into the proof of \cite[Theorem~1.1]{BRR2} and use the supersolutions constructed there.   
This is done in Section~\ref{sec:proofSIRT} below.


Theorem~\ref{thm:speed-SIRT}
shows that  the presence of the line has a true impact on the speed at which the epidemic 
spreads. Indeed, if the diffusion coefficient on the line $D$ is larger than twice the one in the field $d$,
the asymptotic speed of spreading in the direction of the line
is enhanced, compared with the standard one $\cSIR$. How much 
the spreading is enhanced is discussed in Section~\ref{sec:parameters}.
One can then wonder what is the effect of the line on the other directions. In the case $I_0,T_0\equiv0$,
we have shown in \cite{BRR-directions} that the enhancement of the speed occurs in a cone of directions
around the line, with an associated critical angle.
We suspect the same scenario to hold true for system~\eqref{e2.4}.

We conclude this section by showing 
that our model accounts for a true epidemic wave, in the following sense. At every point of the domain under consideration, the number of infected individuals, that was initially close to 0, raises to a nontrivial level around a certain time, then decays back to 0.
\begin{Prop}\label{p4.2}
	Assume that $R_0>1$. 
	There is a constant $T_*>0$, a function $I_*(y)$, defined for $y\geq0$ and locally bounded 
	from below away from 0, and a function $\tau_*(x)$, defined for $x\in\RR$ and such that 
	\begin{equation}
	\label{tau}
	\lim_{\vert x\vert\to+\infty}\frac{\tau_*(x)}{\vert x\vert}=\frac1{\cSIRT},
	\end{equation}
	for which the following is true.
	\begin{enumerate}
		\item (peak around $\tau_*(x)$). We have 
		\begin{equation}
		\label{e4.100}
		T(\tau_*(x),x)\geq T_*, \ \ \  I_*(\tau_*(x),x,y)\geq I_*(y).
		\end{equation}
		\item (decay far from $\tau_*(x)$). The following limits hold uniformly in 
		$x\in\RR$ and locally uniform in $y\geq0$:
		\begin{equation}
		\label{e4.101}
		\begin{array}{rll}
		\di\lim_{t\to+\infty}\biggl(T(\tau_*(x)+t,x),I(\tau_*(x)+t,x,y)\biggl)=&(0,0)\\
		\di\lim_{\tau_*(x)\geq t\to+\infty}\biggl(T(\tau_*(x)-t,x),I(\tau_*(x)-t,x,y)\biggl)=&(0,0).
		\end{array}
		\end{equation}
	\end{enumerate}	
\end{Prop}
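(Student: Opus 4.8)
The plan is to read every assertion off a single family of \emph{entire solutions} obtained by recentring $(u,v)$ at a level set of $u$, exploiting two soft but decisive structural facts: that $T=\partial_t u\ge0$ and $I=\partial_t v\ge0$, so that $u(\cdot,x)$ and $v(\cdot,x,y)$ are nondecreasing in $t$; and that $I_0,T_0$ are compactly supported, hence invisible after translating to infinity in $x$. Concretely, I would fix a threshold $m\in(0,\tfrac\nu\mu v_*)$ and, for $|x|$ large enough that $u_\infty^r(x)>m$ (which holds by Theorem~\ref{thm:Liouville-SIRT}, as $u_\infty^r\to\tfrac\nu\mu v_*$ at infinity), set $\tau_*(x):=\inf\{t\ge0:\ u(t,x)=m\}$, extending $\tau_*$ arbitrarily on the remaining bounded set. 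The asymptotics~\eqref{tau} then follow by squeezing with Theorem~\ref{thm:speed-SIRT}: the outside--the--cone estimate forces $u(t,x)<m$ for $t\le|x|/(\cSIRT+\e)$ and $|x|$ large, giving $\tau_*(x)\ge|x|/(\cSIRT+\e)$, while the inside--the--cone estimate gives $u(t,x)\to u_\infty^r(x)>m$ along $t=|x|/(\cSIRT-\e)$, giving $\tau_*(x)\le|x|/(\cSIRT-\e)$; letting $\e\to0$ yields $\tau_*(x)/|x|\to1/\cSIRT$.

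For any sequence $x_n$ with $|x_n|\to\infty$ I would then recentre at the crossing points: by interior and boundary parabolic estimates the translates $(t,x,y)\mapsto(u(t+\tau_*(x_n),x+x_n),v(t+\tau_*(x_n),x+x_n,y))$ converge, along a subsequence, in $C^{1,2}_{loc}$ to an entire solution $(\tilde u,\tilde v)$ of the \emph{homogeneous} system~\eqref{e2.4} (with $I_0,T_0\equiv0$), normalised by $\tilde u(0,0)=m$. Since $\tilde u(s,0)$ and $\tilde v(s,\cdot)$ are nondecreasing in $s$ and bounded, they converge as $s\to\pm\infty$ to stationary solutions of the source-free system; invoking the Liouville-type classification for that system (the source-free counterpart of Theorem~\ref{thm:Liouville-SIRT}, cf.~\cite{BRR2}), whose only nonnegative bounded stationary states are $(0,0)$ and the constant $(\tfrac\nu\mu,1)v_*$, and using $0<m<\tfrac\nu\mu v_*$, I would identify $\lim_{s\to+\infty}(\tilde u,\tilde v)=(\tfrac\nu\mu,1)v_*$ and $\lim_{s\to-\infty}(\tilde u,\tilde v)=(0,0)$. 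Differentiating in $s$ shows that $(\tilde T,\tilde I)=\partial_s(\tilde u,\tilde v)\ge0$ solves a linear cooperative parabolic problem; as $\tilde u$ is nonconstant, the strong maximum principle together with Hopf's lemma at $y=0$ give $\tilde T,\tilde I>0$ everywhere, in particular $\tilde T(0,0)>0$ and $\tilde I(0,0,y)>0$ for each $y\ge0$.

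The peak~\eqref{e4.100} is then immediate by contradiction: if $T(\tau_*(x_n),x_n)\to0$ along some $|x_n|\to\infty$, the associated wave would have $\tilde T(0,0)=0$, contradicting the strict positivity just obtained; this yields $T_*>0$ and a locally positive $I_*(y)$, the bounded range of $x$ being handled by positivity of the solution for $t>0$ and compactness. For the decay~\eqref{e4.101} I argue again by contradiction along sequences $(t_n,x_n)$ with $t_n\to\infty$. If $x_n$ stays bounded the claim follows from Theorem~\ref{thm:ltb-SIRT} upgraded to time derivatives, whence $T,I\to0$ locally uniformly. If $|x_n|\to\infty$, I use the recentred wave: by its relaxation $\tilde u(s,0)\to\tfrac\nu\mu v_*$ and the \emph{monotonicity of $u$ in $t$}, from $u(\tau_*(x_n)+S,x_n)\to\tilde u(S,0)$ and $t_n\ge S$ one gets $u(\tau_*(x_n)+t_n,x_n)\to\tfrac\nu\mu v_*$; recentring at $(\tau_*(x_n)+t_n,x_n)$ then produces an entire solution attaining the maximal value $\tfrac\nu\mu v_*$ at an interior point, which by the strong maximum principle must be constant, so its time derivative vanishes identically, contradicting $T(\tau_*(x_n)+t_n,x_n)\ge\eta$. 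The second limit in~\eqref{e4.101} is symmetric, using instead $\tilde u(s,0)\to0$ as $s\to-\infty$, the bound $u(\tau_*(x_n)-t_n,x_n)\le m$, and the vanishing of the recentred limit.

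The delicate point is exactly this passage \emph{behind and ahead of the front}. The cone estimates of Theorem~\ref{thm:speed-SIRT} are not, by themselves, uniform enough there: the points $(\tau_*(x)\pm t,x)$ sit at a lag of order $\cSIRT t$ from the wavefront, whereas being inside the cone $|x|\le(\cSIRT-\e)\sigma$ requires a margin of order $\e\sigma$ that grows with $\sigma=\tau_*(x)\pm t$. The role of the monotonicity of $u,v$ in $t$ is precisely to bridge this gap, transferring the relaxation available at a fixed lag $S$ (read off from the entire solution) to the lag $t_n\to\infty$ actually needed. I therefore expect the technical heart to be the construction and classification of these entire solutions: compactness up to the boundary $y=0$ for the Robin-type exchange condition, the strong maximum principle and Hopf lemma for the coupled road--field system, and above all the Liouville-type rigidity pinning the limits as $s\to\pm\infty$ to the constant states $(0,0)$ and $(\tfrac\nu\mu,1)v_*$.
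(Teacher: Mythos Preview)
Your proof is correct and, for the peak~\eqref{e4.100}, tracks the paper's argument closely: both define $\tau_*$ via a level set (the paper chooses $v(t,x,0)=v_*/2$ rather than your $u(t,x)=m$), read off~\eqref{tau} from Theorem~\ref{thm:speed-SIRT}, and then argue by contradiction---recentring along a violating sequence, extracting an entire solution of the source-free system, and invoking the Liouville classification from~\cite[Theorem~4.1]{BRR2} to rule out both constant states. The paper recentres the original $(S,I,T)$ system and deduces $I_\infty\equiv T_\infty\equiv0$ directly from a single vanishing point (which forces the integrated pair to be stationary), whereas you recentre $(u,v)$ and first identify the $s\to\pm\infty$ limits; both routes are sound and rest on the same ingredients.

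One point worth flagging: the paper's written proof addresses only the peak~\eqref{e4.100} and does not treat the decay~\eqref{e4.101} at all. Your second-recentring argument for~\eqref{e4.101}---using the monotonicity of $u,v$ in $t$ to transfer the relaxation of the entire wave from a fixed lag $S$ to the diverging lag $t_n$, and then observing that the new limit attains the extremal value $\frac\nu\mu v_*$ (resp.~$0$), whence its time derivative vanishes at the origin---is a genuine addition and is correct.
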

This proposition is proved in Section~\ref{sec:proofSIRT}. We note that, for the true $SIR$ model, one has a much stronger result, as one may relate the maximum of $I$ to the maximum of the derivative of the one-dimensional Fisher-KPP wave. While we do have the existence of travelling wave here (see \cite{BRR4}), we do not have precise convergence results for the solution of the Cauchy Problem. This will be done in a future study.

\section{Effects of the line on the total number of infected per location}\label{sec:effects}

We recall that the total number of infected people at a given location $(x,y)$,
for both models without and with the road, is given by
$$I_{tot}(x,y)=S_0\big(1-e^{-\beta \tilde v_\infty(x,y)}\big),$$ 
where $\tilde v_\infty$ is either $v_\infty$ or $v_\infty^r$,
that is, (the $v$ component of) the unique positive, stationary solution 
of the problem. We know that $v_\infty$ and $v_\infty^r$ have the same limit at infinity:
$0$ if $R_0\leq1$ and the positive zero $v^*$ of $f$ if $R_0>1$.
%
%
What differs is the rate of decay towards the limit state.
Indeed, Theorem~\ref{thm:further}$(ii)$-$(iii)$ below  imply that
the decay of $v_\infty$ is 
$$\sqrt{-f'(0)/d}\quad\text{if }R_0\leq1,\qquad
\sqrt{-f'(v_*)/d}\quad\text{if }R_0>1,$$
whereas the next result shows that the one of $(u_\infty^r,v_\infty^r)$ is strictly slower.
\begin{Th}\label{thm:decay}
	%
	The stationary solution to the problem with the road~\eqref{e2.4} satisfies
	$$u_\infty^r(x)=
	\begin{cases}
	0 & \hspace{-7pt}\text{if }R_0\leq1\\
	\frac\nu\mu\, v_* & \hspace{-7pt}\text{if }R_0>1
	\end{cases}
	\;+\,e^{-\kappa(x)|x|},\qquad
	v_\infty^r(x,y)=
	\begin{cases}
	0 & \text{if }R_0\leq1\\
	v_* & \text{if }R_0>1
	\end{cases}
	\;+\,e^{-\lambda(x,y)|x|},$$
	with 
	$$\lim_{|x|\to\infty}\kappa(x)=\lim_{|x|\to\infty}\lambda(x,y)=
	a_*\geq0,\qquad
	\text{locally uniformly in $y\geq0$}.
	$$
	Moreover, $a_*=0$ if $R_0=1$, whereas
	$$0<a_*<	
	\begin{cases}
	\sqrt{\frac{-f'(0)}d} & \text{if }R_0<1\\
	\sqrt{\frac{-f'(v_*)}d} & \text{if }R_0>1.
	\end{cases}
	$$
\end{Th}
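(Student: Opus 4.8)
The plan is to analyze the linearized exponential decay of the stationary solution $(u_\infty^r,v_\infty^r)$ toward its limit state at infinity, and to compare the resulting decay rate $a_*$ with the rate $\sqrt{-f'(w_\infty)/d}$ that governs the field-only problem, where $w_\infty$ denotes the relevant limit ($0$ if $R_0<1$, $v_*$ if $R_0>1$). First I would set $\tilde u:=u_\infty^r-\frac\nu\mu w_\infty$ and $\tilde v:=v_\infty^r-w_\infty$, so that the pair $(\tilde u,\tilde v)$ solves a system which, near infinity, is governed by the linearization of~\eqref{e2.4} at the constant state. Since $f$ is smooth and $f(w_\infty)=0$, the linearized field equation is $-d\Delta\tilde v=f'(w_\infty)\tilde v$ with $f'(w_\infty)<0$ in both the subcritical and supercritical regimes (note $f'(0)=\alpha(R_0-1)<0$ when $R_0<1$, and $f'(v_*)<0$ by concavity of $f$), together with the Robin-type boundary coupling $-d\partial_y\tilde v(x,0)=\mu\tilde u(x)-\nu\tilde v(x,0)$ and the line equation $-D\tilde u''=\nu\tilde v(x,0)-\mu\tilde u$.

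The key step is to seek the admissible decay exponents by plugging the ansatz $(\tilde u,\tilde v)=(A,B(y))e^{-a|x|}$ into the linearized system. In the field one is led to $B(y)=e^{\gamma y}$ with $\gamma$ satisfying $d(a^2+\gamma^2)=-f'(w_\infty)$, so the decaying-in-$y$ mode requires $\gamma=-\sqrt{-f'(w_\infty)/d-a^2}$, which forces $a<\sqrt{-f'(w_\infty)/d}$; this is precisely the constraint that will yield the strict inequality in the statement. Substituting the boundary and line relations produces a scalar dispersion relation $\Phi(a)=0$ linking $a$, the coupling constants $\mu,\nu$, and the ratio $D/d$. I would then show this dispersion relation admits a unique root $a_*\in(0,\sqrt{-f'(w_\infty)/d})$ whenever $R_0\neq1$, by a monotonicity/continuity argument on $\Phi$ as $a$ ranges over the admissible interval, checking the sign of $\Phi$ at the endpoints. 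The case $R_0=1$ is degenerate: there $f'(0)=0$, the field mode has no exponential decay in $x$ alone, and a separate (algebraic or subexponential) analysis gives $a_*=0$; I would treat it as a limiting case of the dispersion relation as $f'(w_\infty)\to0^-$.

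To convert this formal exponent computation into the rigorous statement that $v_\infty^r-w_\infty=e^{-\lambda(x,y)|x|}$ with $\lambda(x,y)\to a_*$, I would build exponential super- and subsolutions of the form $(C_\pm)(A,e^{\gamma_* y})e^{-a_\pm|x|}$ with $a_\pm$ slightly above and below $a_*$, and invoke the comparison principle together with the convergence and positivity already established in Theorem~\ref{thm:Liouville-SIRT}; standard sliding/barrier arguments then pin the logarithmic decay rate $\lambda(x,y)=-\frac{1}{|x|}\log(v_\infty^r-w_\infty)$ to the limit $a_*$, locally uniformly in $y$, and similarly for $\kappa(x)$ with $\tilde u$.

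The main obstacle I anticipate is twofold. First, the coupling through the boundary condition means the decay is genuinely a \emph{system} phenomenon: the line's fast diffusion $D$ feeds back into the field and slows the field's decay, so the strict inequality $a_*<\sqrt{-f'(w_\infty)/d}$ is not merely the field eigenvalue but a shifted root of the coupled dispersion relation — I must rule out that the coupling is ever trivial, which is exactly where the hypothesis $\mu,\nu>0$ enters and where the argument parallels the mechanism behind the speed enhancement in Theorem~\ref{thm:speed-SIRT}. Second, establishing that the dispersion relation has a root strictly inside the admissible interval (rather than at its boundary) requires a careful sign analysis of $\Phi$ near $a=\sqrt{-f'(w_\infty)/d}$, where $\gamma\to0$ and the mode becomes non-decaying in $y$; this boundary behavior is the delicate point, and I expect to spend most effort verifying that $\Phi$ changes sign strictly before that threshold.
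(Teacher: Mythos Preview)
Your overall strategy---linearize around the limit state, extract the decay exponent from the resulting algebraic (dispersion) relation, then sandwich $(u_\infty^r,v_\infty^r)$ between exponential barriers---is exactly the route the paper takes. The algebraic system you would obtain is precisely the paper's system~\eqref{algebraic}, whose unique positive solution $(a_*,b_*,\gamma_*)$ automatically satisfies $a_*<\sqrt{-f'(v_*)/d}$ by the geometry of that system (it is the intersection of a curve starting at the origin with the circle of radius $\sqrt{-f'(v_*)/d}$), so your anticipated ``delicate sign analysis near the threshold'' is in fact immediate.

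There is, however, a genuine gap in your barrier construction. For the \emph{supersolution} you should not perturb to some $a_+>a_*$: the paper uses the exact exponent $a_*$, because the concavity of $f$ gives $f(v_*+s)\leq f'(v_*)s$ for \emph{all} $s>0$, so the linearized solution is a supersolution of the nonlinear problem for every amplitude $h$. This is essential, since $h$ must be taken large to dominate the source terms $I_0,T_0$ on their supports; a perturbed exponent would only yield a supersolution for small $h$.

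More seriously, your proposed \emph{subsolution} of the form $C_-(A,e^{\gamma_* y})e^{-a_-|x|}$ will not work as stated. To run the comparison on the half-plane $\{x>0,\,y>0\}$ you must place the subsolution below $v_\infty^r$ on the entire line $\{x=0,\,y\geq0\}$; but $v_\infty^r(0,y)-v_*\to0$ as $y\to\infty$ with an a priori unknown rate, so no uniform choice of small amplitude guarantees this. The paper's remedy is to replace $e^{-by}$ by $e^{-by}-\e e^{by}$, which vanishes at the finite height $y^\e=-\frac{\log\e}{2b}$. This confines the comparison to the half-strip $\{x>0,\,0<y<y^\e\}$, on whose compact left edge $v_\infty^r-v_*$ has a positive minimum, and on whose top edge the subsolution equals $v_*\leq v_\infty^r$. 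Simultaneously one must penalize the linear coefficient, replacing $-f'(v_*)$ by some $\zeta>-f'(v_*)$ in the algebraic system, so that the nonlinear error $f(v_*+h\tilde v)-f'(v_*)h\tilde v$ is absorbed for small $h$. The resulting exponent $a_*^{\zeta,\e}$ then converges to $a_*$ as $(\zeta,\e)\to(-f'(v_*),0)$, yielding the sharp lower bound. Your ``standard sliding/barrier arguments'' do not cover this; the truncation in $y$ is the key missing idea.
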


This result has the important consequence of showing that $v_\infty^r>v_\infty$ 
close to the $x$-axis and far away from the origin.
Namely, {the presence of the road increases the number of infected people far from the epicentre
	of the epidemic}.

Observe that the decay of $v_\infty$ is the natural one provided by the linearised equation 
(outside $\supp I_0$) and it is obtained through rather standard arguments.
On the contrary, the decay of $v_\infty^r$ is slower than the natural one.
The proof of Theorem~\ref{thm:decay}, presented in Section~\ref{sec:proofSIRT},
relies on some ideas from~\cite{BRR2}, the keystone consisting in the 
construction of suitable sub and supersolutions.

Let us sketch the argument. 
To start with, we look for a stationary solution to~\eqref{e2.4} as a perturbation of the limit at infinity:
$$u(x)=\frac{\nu}{\mu}v_*+h \tilde u(x),\qquad v(x,y)=v_*+h\tilde v(x,y),$$
where, for notational simplicity, we have set 
$v_*:=0$ in the cases $R_0\leq1$.
Dropping the $o(h)$ terms, outside the supports of
$I_0$ and $T_0$ we get the linearised system
\Fi{linearised}
\left\{ \begin{array}{lll} -D \partial_{xx} \widetilde{u}=  \nu
	\widetilde{v}(x,0)-\mu \widetilde{u}(x)& (x\in \mathbb {R})\\ -d\varDelta
	\widetilde{v}=f'(v_*)\widetilde{v}& (x\in \mathbb {R},\ y>0)\\ -d\partial_y
	\widetilde{v}(x,0)=\mu \widetilde{u}(x)- \nu \widetilde{v}(x,0)& (x\in \mathbb {R}).
\end{array} \right.
\Ff
Observe that $f'(v_*)\leq0$. 
We seek for solutions in the form
\Fi{expsol}
\tilde u(x)=e^{-ax},\qquad
\tilde v(x,y)=\gamma e^{-ax-by},
\Ff
with $a,b,\gamma>0$. 
After some computation, the problem reduces to the algebraic system
\Fi{algebraic}
\begin{cases}
	\displaystyle Da^2=\frac{\mu db}{db+\nu}\\
	\displaystyle a^2+b^2=\frac{-f'(v_*)} d\\
	\displaystyle \gamma=\frac{\mu}{db+\nu}.
\end{cases}
\Ff
If $R_0\neq1$, i.e.~$f'(v_*)<0$,
this system admits a unique positive solution, that we denote $(a_*,b_*,\gamma_*)$, whose
first two components are represented in Figure~\ref{fig:a*}.
\begin{figure}[ht]
	\begin{center}
		\includegraphics[height=5.5cm]{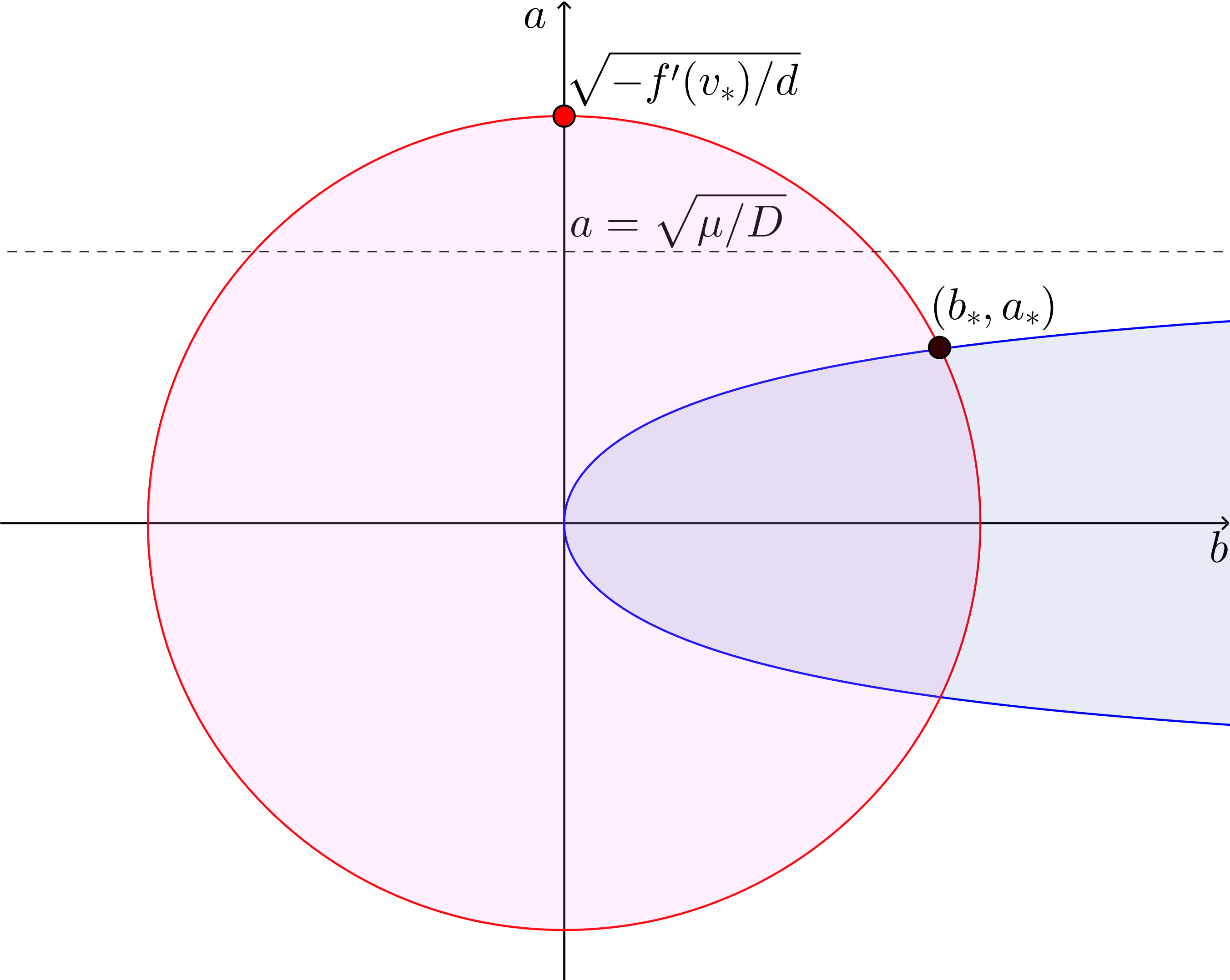}
		\caption{The solution $(b_*,a_*)$ to~\eqref{algebraic}. Shaded regions correspond to inequalities ``$\leq$''.}
		\label{fig:a*}
	\end{center}
\end{figure}
Observe that $a_*<\sqrt{-f'(v_*)/d}$, which is the radius of the disk.
If $R=0$, i.e.~$f'(v_*)=0$, then the unique solution of~\eqref{algebraic} is $(a_*,b_*,\gamma_*):=(0,0,\mu/\nu)$.
This will provide us with a supersolution to~\eqref{e2.4},
whence the exponential upper bound in Theorem~\ref{thm:decay}.

For the lower bound, we need to modify the construction in order to obtain 
a subsolution supported in a strip.
First of all, we penalise the reaction term in the field by considering a parameter
$\zeta>-f'(v_*)\geq0$. Next, we modify the above function $\tilde v$, considering now
the pair
\Fi{expsole}
\tilde u(x)=e^{-ax},\qquad
\tilde v(x,y)=\gamma e^{-ax}\big(e^{-by}-\e e^{by}\big),
\Ff
where $0<\e<1$. The linear system~\eqref{linearised} with $f'(v_*)$ replaced by $-\zeta$
reduces to the new algebraic system
\Fi{algebraice}
\begin{cases}
	\displaystyle Da^2=\frac{\mu db(1+\e)}{db(1+\e)+\nu(1-\e)}\\
	\displaystyle a^2+b^2=\frac\zeta d\\
	\displaystyle \gamma=\frac{\mu}{db(1+\e)+\nu(1-\e)}.
\end{cases}
\Ff
This system converges to~\eqref{algebraic} as $(\zeta,\e)\to(-f'(v_*),0)$, 
and thus its unique positive solution, denoted by
$(a_*^{\zeta,\e},b_*^{\zeta,\e},\gamma_*^{\zeta,\e})$, satisfies
\Fi{zetae}
\lim_{(\zeta,\e)\to(-f'(v_*),0)}(a_*^{\zeta,\e},b_*^{\zeta,\e},\gamma_*^{\zeta,\e})=(a_*,b_*,\gamma_*).
\Ff
This leads to a family of bounded subsolutions with a decay in the $x$-variable arbitrarily close to 
$a_*$.

The next result shows that there are regions where the road has the opposite effect.
\begin{Th}\label{thm:effect}
	Suppose that $R_0\neq1$ and that $I_0(x,y)$ is either symmetric with respect to~$y$, or satisfies 
	$\supp I_0\subset \RR\times[0,+\infty)$.
	Then
	there exist two subsets $\mc{E}^\pm$ of $\RR\times[0,+\infty)$
	such that
	$$v_\infty^r>v_\infty\quad\text{in }\mc{E}^+,\qquad
	v_\infty^r<v_\infty\quad\text{in }\mc{E}^-.$$
\end{Th}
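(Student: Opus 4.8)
The plan is to compare, on the closed upper half-plane $\{y\ge0\}$, the component $v_\infty^r$ of the stationary solution of the road model~\eqref{e2.4} with the restriction of the full-plane stationary solution $v_\infty$ of~\eqref{e2.2}. Throughout I use that both functions stay above $v_*$ and tend to $v_*$ at infinity (Theorems~\ref{thm:Liouville-SIRT} and~\ref{thm:Liouville}), and that $f$ is \emph{strictly decreasing} on $[v_*,+\infty)$, since it is concave with $f(v_*)=0$ and $f'(v_*)<0$ when $R_0\neq1$. The set $\mc{E}^+$ comes essentially for free from Theorem~\ref{thm:decay}: along the $x$-axis one has $v_\infty^r(x,0)-v_*\sim e^{-a_*|x|}$ with $a_*<\sqrt{-f'(v_*)/d}$, whereas $v_\infty$ is radial outside $\supp I_0$ (Theorem~\ref{thm:further}) and so $v_\infty(x,0)-v_*$ decays at the faster rate $\sqrt{-f'(v_*)/d}$. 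Hence $v_\infty^r>v_\infty$ on a neighbourhood of the axis for $|x|$ large; this holds under either hypothesis on $I_0$, as the decay of $v_\infty$ along the axis does not depend on where the compact support of $I_0$ sits.

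For $\mc{E}^-$ the key is a global flux balance. Integrating $-d\Delta v_\infty^r=f(v_\infty^r)+I_0$ over $\{y>0\}$ and using the divergence theorem (the flux on the large semicircle vanishes by the exponential decay of $v_\infty^r-v_*$), the boundary flux at $y=0$ is $-d\partial_y v_\infty^r=\mu u_\infty^r-\nu v_\infty^r(\cdot,0)$; integrating the road equation in $x$ gives $\int_\RR(\mu u_\infty^r-\nu v_\infty^r(x,0))\,dx=\int_\RR T_0\,dx$, whence
\[
\int_{\{y>0\}}f(v_\infty^r)\,dX=-\int_{\{y>0\}}I_0\,dX-\int_\RR T_0\,dx .
\]
In the symmetric case $v_\infty$ restricted to $\{y>0\}$ satisfies $\partial_y v_\infty(\cdot,0)=0$, so the same computation yields $\int_{\{y>0\}}f(v_\infty)=-\int_{\{y>0\}}I_0$. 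Subtracting (and taking $T_0\equiv0$) gives the exact identity $\int_{\{y>0\}}\big(f(v_\infty^r)-f(v_\infty)\big)\,dX=0$. As $f$ is strictly monotone on the common range $[v_*,+\infty)$, the integrand has the sign of $v_\infty-v_\infty^r$; since $v_\infty^r\not\equiv v_\infty$ (their decay rates differ, by Theorem~\ref{thm:decay}), it is not identically zero, and an integral equal to zero forces a sign change. Thus both $\{v_\infty^r>v_\infty\}$ and $\{v_\infty^r<v_\infty\}$ have positive measure, producing $\mc{E}^+$ and $\mc{E}^-$ at once.

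The main obstacle is the second hypothesis, $\supp I_0\subset\RR\times[0,+\infty)$ without symmetry. Now $v_\infty$ restricted to $\{y>0\}$ carries a nonzero flux at $y=0$, and the computation produces the extra term $\int_{\{y>0\}}\big(f(v_\infty^r)-f(v_\infty)\big)=\int_{\{y<0\}}f(v_\infty)<0$, the sign coming from $f(v_\infty)<0$ wherever $v_\infty>v_*$ in the source-free lower half-plane. This is consistent with $v_\infty^r\ge v_\infty$ everywhere, so the identity no longer delivers $\mc{E}^-$: heuristically, the full-plane model leaks mass downward while the road exchanges zero net mass, so $v_\infty^r$ tends to dominate $v_\infty$ above the line. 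To extract $\mc{E}^-$ I would first reduce to the boundary: the difference $w=v_\infty^r-v_\infty$ solves $-d\Delta w+q\,w=0$ in $\{y>0\}$ with $q=-\big(f(v_\infty^r)-f(v_\infty)\big)/(v_\infty^r-v_\infty)\ge -f'(v_*)>0$ and $w\to0$ at infinity, so by the maximum principle $w$ has no negative interior minimum; hence $\mc{E}^-\neq\emptyset$ exactly when $v_\infty^r(x_0,0)<v_\infty(x_0,0)$ for some $x_0$. Producing such an $x_0$ is the delicate point: I expect to compare the two solutions far from the origin in the steep directions (near the $y$-axis, outside the strip where Theorem~\ref{thm:decay} guarantees enhancement), where both decay at the field rate $\sqrt{-f'(v_*)/d}$ and the comparison is decided by prefactors depleted near the road's absorbing region; alternatively, introducing the reflected Neumann solution $\hat V\ge v_\infty$ (the full-plane solution with evenly reflected source), the clean balance of the symmetric case gives $v_\infty^r<\hat V$ somewhere, which must then be upgraded to $v_\infty^r<v_\infty$ through a quantitative estimate of $\hat V-v_\infty$. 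Closing this last gap is the crux of the argument.
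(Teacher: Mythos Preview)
Your treatment of $\mc{E}^+$ and of $\mc{E}^-$ in the symmetric case is exactly the paper's: decay-rate comparison via Theorems~\ref{thm:further}--\ref{thm:decay} for the former, and the flux balance $\int_{\{y>0\}}\big(f(v_\infty^r)-f(v_\infty)\big)=0$ (taking $T_0\equiv0$) forcing a sign change for the latter, using the strict monotonicity of $f$ on $[v_*,+\infty)$.

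For the second hypothesis $\supp I_0\subset\RR\times[0,+\infty)$, the paper closes the argument by asserting that $v_\infty(x,y)$ is decreasing in $y$ on $[0,+\infty)$, hence $\partial_y v_\infty(\cdot,0)\le0$; this yields $\int_{\{y>0\}} f(v_\infty)\le\int_{\{y>0\}} f(v_\infty^r)$, and then $\mc{E}^-$ follows from the $\mc{E}^+$ contribution exactly as in the symmetric case. Your computation goes the other way, and your sign is the correct one: when the source sits in $\{y\ge0\}$, the reflection argument of Theorem~\ref{thm:further}$(i)$ applied with $e=(0,-1)$ and $\delta=0$ gives $v_\infty(x,\cdot)$ \emph{increasing} on $(-\infty,0]$, hence $\partial_y v_\infty(\cdot,0)\ge0$. (Concretely, if $I_0$ is concentrated near $(0,y_0)$ with $y_0>0$, then $v_\infty(0,y)$ increases on $[0,y_0]$, so it is certainly not decreasing on $[0,+\infty)$.) Equivalently, integrating $-d\Delta v_\infty=f(v_\infty)$ over $\{y<0\}$ gives $d\int_\RR\partial_y v_\infty(\cdot,0)=-\int_{\{y<0\}}f(v_\infty)>0$ since $v_\infty>v_*$ there. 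This flips the paper's key inequality to $\int_{\{y>0\}} f(v_\infty)\ge\int_{\{y>0\}} f(v_\infty^r)$, which is compatible with $v_\infty^r\ge v_\infty$ everywhere and no longer produces~$\mc{E}^-$.

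So the obstacle you flag is a genuine gap, present in the paper's proof as well. Your two sketched workarounds (prefactor comparison away from the enhanced strip, or passing through the reflected Neumann solution $\hat V\ge v_\infty$) are reasonable directions, but as you acknowledge they are not completed; in particular the step from $v_\infty^r<\hat V$ somewhere to $v_\infty^r<v_\infty$ somewhere needs a quantitative control of $\hat V-v_\infty$ that is not supplied.
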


This result shows that 
{the presence of the road decreases the number of infected people in some areas}.


%


\section{Proofs of the results}\label{sec:proofs}


\subsection{The $SIR$ model}\label{sec:proofSIR}

We give here the proofs of the results of Section~\ref{sec:SIR}, and we also derive some further qualitative
properties of the solution. We consider a more general framework:
we set problems~\eqref{eSIR},~\eqref{e2.2} in arbitrary space dimension $N\geq1$, i.e., $X\in\RR^N$,
and for initial data $v(0,X)$ not necessarily identically equal to zero, but rather
nonnegative and compactly supported.
This does not entail any additional difficulty.


\begin{proof}[Proof of Theorem~\ref{thm:Liouville}]
	On one hand, the function identically equal to $0$ is a subsolution to the stationary equation
	\Fi{KPPstationary}
	d\Delta v+f(v)+I_0(X)=0\quad(X\in\RR^N).
	\Ff
	On the other hand, the constant function $\bar v\equiv s$ is a supersolution of this equation provided
	$s>0$ is sufficiently large, because $I_0$ is bounded and $f(+\infty)=-\infty$.
	The existence of a solution $0\leq v_\infty \leq s$ then follows from the standard sub/supersolution method.
	Actually, as $0$ is not a solution, the elliptic strong maximum principle yields
	$v_\infty>0$ in $\RR^N$.
	
	Let us now derive the limit as $|X|\to\infty$.
	By elliptic estimates, any sequence of translations $v_\infty(X+X_n)$, with
	$(X_n)_{n\in\N}$ diverging, converges (up to subsequences) towards a nonnegative, bounded 
	solution $\tilde v$ to 
	$$d\Delta \tilde v+f(\tilde v)=0\quad(X\in\RR^N).$$ 
	If $R_0\leq1$, i.e.~$f'(0)\leq0$, then $f<0$ on $(0,+\infty)$, from which one readily
	derives $\tilde v\equiv0$
	(for instance by comparison with the ODE $\dot u=f(u)$) that necessarily. 
	This shows that $v_\infty(X)\to0$ as $|X|\to\infty$ when~$R_0\leq1$.  
	
	In the case $R_0>1$, we remark that $v_\infty$	is a supersolution to the classical Fisher-KPP equation
	\Fi{KPPstandard}
	\partial_t v=d\Delta v+f(v)\quad(t>0,\ X\in\RR^N),
	\Ff
	for which the ``hair-trigger'' effect holds, see~\cite{AW}: any solution with a positive, bounded initial datum
	converges as $t\to+\infty$, locally uniformly in space, to the positive zero~$v_*$ of $f$.
	We infer by comparison that $v_\infty\geq v_*$ in $\RR^N$.
	This shows in particular that $\tilde v$ is positive, and thus
	applying the ``hair-trigger'' effect to $\tilde v$ we derive $\tilde v\equiv v_*$.
	We have shown that $v_\infty(X)\to v_*$ as $|X|\to\infty$ when~$R_0>1$.

	To prove the uniqueness, we distinguish again the cases $R_0>1$ and $R_0\leq1$.
	
	\smallskip
	{\em Case $R_0>1$}.\\
	We need to show that, for any pair of positive, bounded solutions
	$v_1,v_2$ to~\eqref{KPPstationary}, there holds that $v_1\leq v_2$ in~$\RR^N$. 
	Assume by way of contradiction that 
	$$k:=\sup_{\RR^N}\frac{v_1}{v_2}>1.$$
	Since $v_1,v_2\to v_*$ at infinity, the 
	above supremum is a maximum, attained at some point~$\bar X$. Subtracting the equations 
	we get
	$$d\Delta (v_1-kv_2)+f(v_1)-kf(v_2)+I_0(X)(1-k)=0,$$ 
	which, evaluated at the point $\bar X$ (where $v_1=k v_2$ and
	$\Delta (v_1-kv_2)\leq0$) yields 
	$$k f(v_2(\bar x))\leq f(v_1(\bar X))=f(k v_2(\bar X)).$$
	But this is impossible because, being concave and vanishing at $0$, $f$ satisfies
	$$\forall s>0,\quad 
	\frac {f(ks)}{ks}<\frac{f(s)}s.$$
	\smallskip
	{\em Case $R_0\leq1$}.\\
	Now, any given positive, bounded solutions $v_1,v_2$ to~\eqref{KPPstationary}
	tend to $0$ at infinity. Take $\e>0$ and call $v_2^\e:=v_2+\e$.
	Because $f$ is decreasing in $\RR_+$, we see that
	$$d\Delta v_2^\e+f(v_2^\e)+I_0(X)<0\quad(X\in\RR^N).$$
	Assuming by contradiction that $v_1>v_2^\e$ somewhere and repeating the same arguments as before
	with $v_2$ replaced by $v_2^\e$, we end up with the inequality
	$$k f(v_2^\e(\bar X))<f(k v_2^\e(\bar X)),$$
	at some point $\bar X$ and for some $k>1$. As seen before, this is impossible.
	This means that $v_1\leq v_2^\e$ in $\RR^N$, which, by the arbitrariness
	of $\e$, entails the desired inequality $v_1\leq v_2$.	
\end{proof}

The following result contains some additional properties ov $v_\infty$.
\begin{Th}\label{thm:further}
	The stationary solution~$v_\infty(X)$ satisfies the following properties:
	\begin{enumerate}[$(i)$]

		%
		
		\item $v_\infty$ is radially decreasing outside the 
		support of $I_0$,
		i.e.,
		$$\forall e\in\mathbb{S}^1, \ \delta\leq r_1<r_2,\quad
		v_\infty(r_1 e)>v_\infty(r_2 e),$$
		where $\delta$ is such that $X\cdot e\leq\delta$ for all $X\in\supp I_0$;
		

		\item
		if $R_0>1$ then $v_\infty>v_*$ and moreover
		\Fi{decay>1}
		v_\infty(X)=v_* + e^{-\lambda(X)|X|},\quad
		\text{ with }\;\lim_{|X|\to\infty}\lambda(X)=\sqrt{\frac{-f'(v_*)}d};
		\Ff
		
		\item
		if $R_0\leq1$ then
		\Fi{decay<1}
		v_\infty(X)=e^{-\lambda(X)|X|},\quad
		\text{ with }\;\lim_{|X|\to\infty}\lambda(X)=\sqrt{\frac{-f'(0)}d}.
		\Ff	
		
	\end{enumerate}
\end{Th}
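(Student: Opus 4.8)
The three assertions rest on the maximum principle together with the concavity of $f$, and I would treat them separately. For the radial monotonicity~$(i)$, the plan is to run the method of moving planes along a fixed direction $e\in\mathbb{S}^{N-1}$. Writing $\Sigma_\lambda=\{X\cdot e>\lambda\}$, letting $X^\lambda$ be the reflection of $X$ across $\{X\cdot e=\lambda\}$ and setting $w_\lambda(X):=v_\infty(X^\lambda)-v_\infty(X)$, one has, for every $\lambda\geq\delta$ (so that $I_0\equiv0$ on $\Sigma_\lambda$), the inequality $d\Delta w_\lambda+c_\lambda w_\lambda\leq0$ on $\Sigma_\lambda$, where $c_\lambda(X)=f'(\xi(X))$ with $\xi(X)$ between $v_\infty(X)$ and $v_\infty(X^\lambda)$, the $\leq$ coming from $I_0(X^\lambda)\geq0$. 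The observation that shortens the whole argument is that $c_\lambda<0$ everywhere: if $R_0>1$ then $v_\infty\geq v_*$ by the proof of Theorem~\ref{thm:Liouville}, so $\xi\geq v_*$ and $f'(\xi)\leq f'(v_*)<0$ by strict concavity; if $R_0\leq1$ then $v_\infty>0$, so $\xi>0$ and $f'(\xi)<f'(0)\leq0$. Since $w_\lambda=0$ on $\{X\cdot e=\lambda\}$ and $w_\lambda\to0$ uniformly at infinity (because $v_\infty$ has a uniform limit there), a strictly negative interior infimum of $w_\lambda$ would violate $d\Delta w_\lambda+c_\lambda w_\lambda\leq0$ at the minimizer; hence $w_\lambda\geq0$, and the strong maximum principle gives $w_\lambda>0$ in the interior. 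Taking $\lambda=(r_1+r_2)/2\geq\delta$ and $X=r_2e$ yields $v_\infty(r_1e)>v_\infty(r_2e)$.

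For the decay rates $(ii)$ and $(iii)$, set $L:=v_*$ if $R_0>1$ and $L:=0$ if $R_0\leq1$, put $\omega:=\sqrt{-f'(L)/d}$, and write $\lambda(X):=-|X|^{-1}\ln(v_\infty(X)-L)$; I work in the exterior region where $I_0\equiv0$, so that $\phi:=v_\infty-L$ solves $d\Delta\phi+f(L+\phi)=0$. First one needs $\phi>0$: for $R_0\leq1$ this is already in the proof of Theorem~\ref{thm:Liouville}, while for $R_0>1$ the strict inequality $v_\infty>v_*$ follows from the strong maximum principle applied to $\phi\geq0$, an interior zero being incompatible with $I_0\not\equiv0$. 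For the upper bound I would use concavity directly: $f(L+\phi)\leq f'(L)\phi$ gives $\Delta\phi\geq\omega^2\phi$, i.e.\ $\phi$ is a subsolution of the constant-coefficient operator $\Delta-\omega^2$; comparing it on an exterior domain with the radial supersolution $Ce^{-(\omega-\varepsilon)|X|}$ (a supersolution of $\Delta-\omega^2$ for $|X|$ large because $\omega-\varepsilon<\omega$), both tending to $0$ at infinity, yields $\phi\leq Ce^{-(\omega-\varepsilon)|X|}$, hence $\liminf_{|X|\to\infty}\lambda(X)\geq\omega$.

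The matching lower bound is the delicate half and, in my plan, the main obstacle. Here I would use the sharper $C^2$ expansion $f(L+\phi)=f'(L)\phi+O(\phi^2)$: for $|X|$ large, where $\phi$ is small, this gives $\Delta\phi\leq\tilde\omega^2\phi$ with $\tilde\omega$ as close to $\omega$ as wanted, so $\phi$ is a supersolution of $\Delta-\tilde\omega^2$ and dominates a radial subsolution $ce^{-\tilde\omega'|X|}$ with $\tilde\omega'>\tilde\omega$, pinned below $\phi$ on a large sphere; this gives $\limsup\lambda(X)\leq\tilde\omega'\to\omega$, and combined with the previous paragraph proves $\lambda(X)\to\omega$. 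The genuinely separate case is $R_0=1$, where $f'(0)=0$ and $\omega=0$: the upper bound becomes vacuous, and for the lower bound I would build, for each $\varepsilon>0$, a subsolution $ce^{-\varepsilon|X|}$, which works because $f(s)<0$ for $s>0$ vanishes only to second order at $0$, so the positive term $d\varepsilon^2ce^{-\varepsilon|X|}$ dominates $f(ce^{-\varepsilon|X|})$ far out; this forces $\lambda(X)\to0$. Throughout, the recurring difficulty is legitimising the comparison principle on unbounded exterior domains, which is precisely why the comparisons are set up against the constant-coefficient operators $\Delta-\omega^2$ (with a favourable sign when $\omega>0$) and why $R_0=1$ needs the dedicated quadratic argument.
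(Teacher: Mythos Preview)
Your argument is correct, and for each of the three parts it runs along a somewhat different track from the paper.

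For $(i)$, both you and the paper use a single reflection across the hyperplane $\{X\cdot e=\lambda\}$; the difference is in how the comparison on the half-space is closed. The paper invokes the ratio argument $k=\sup v_1/v_2$ from the proof of Theorem~\ref{thm:Liouville} (together with the $\e$-perturbation in the case $R_0\leq1$). Your shortcut --- that the linearisation coefficient $c_\lambda=f'(\xi)$ is strictly negative because $\xi\geq v_*$ when $R_0>1$ and $\xi>0$ when $R_0\leq1$ --- makes the maximum principle apply directly on the unbounded half-space, so no ratio trick is needed. This is cleaner, at the cost of having to appeal to the fact $v_\infty\geq v_*$ (which is indeed proved inside Theorem~\ref{thm:Liouville}).

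For $(ii)$--$(iii)$, the barriers are of the same flavour but the logic of the comparison differs. The paper builds the upper barrier from planar exponentials $ke^{-\sqrt{-f'(v_*)/d}\,X\cdot e}$, takes the minimum with $v_\infty$ to obtain a \emph{global} generalised supersolution on all of $\RR^N$, and then identifies the resulting solution with $v_\infty$ via the Liouville uniqueness; the lower bound is handled symmetrically with a perturbed radial subsolution and a max. You instead work on the exterior domain, pass to the linear operator $\Delta-\omega^2$ (or $\Delta-\tilde\omega^2$), and use that its zeroth-order coefficient has the good sign so that the comparison principle holds for bounded functions vanishing at infinity. The paper's route yields the sharp upper bound $\phi\leq ke^{-\omega|X|}$ with the exact rate (no $\e$-loss), while yours gives $\phi\leq Ce^{-(\omega-\e)|X|}$; both suffice for the limit $\lambda(X)\to\omega$. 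Your separate treatment of $R_0=1$ via the quadratic vanishing of $f$ is correct and is essentially what the paper's scheme also reduces to in that borderline case.
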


\begin{proof} \ $\ $\
	
	\smallskip
	{\em Statement $(i)$}.\\
	We make use of a reflection argument due to Jones~\cite{Jones}.
	Fix a direction $e\in\Sph$ and take $r>\delta$, where 
	$\delta$ is such that $X\cdot e\leq\delta$ for all $X\in\supp I_0$.
	Let $\mc{T}$ be the reflection with respect to the hyperplane $\{X\cdot e=r\}$, and define $\t v:=v_\infty\circ\mc{T}$. 
	In the half-space $\{X\cdot e<r\}$ ($\supset\supp I_0$) the function $\tilde v$ solves 
	$d\Delta\t v+f(\t v)=0$, hence it is a subsolution of the equation
	satisfied by $v_\infty$. In addition, $\t v\equiv v_\infty$ on~$\{X\cdot e=r\}$.
	Repeating the comparison argument in the proof of Theorem~\ref{thm:Liouville}
	with $v_1=\t v$ and $v_2=v_\infty$,
	but in $\{X\cdot e<r\}$, and observing that the point $\bar X$
	cannot belong to~the boundary $\{X\cdot e=r\}$, one infers that $\t v\leq v_\infty$ in $\{X\cdot e<r\}$.
	Actually, the strong maximum principle and Hopf's lemma imply that the inequality is strict and moreover
	$$\nabla \t v(re)\cdot e>\nabla v_\infty(re)\cdot e.$$
	Since $\nabla \t v(re)\cdot e=-\nabla v_\infty(re)\cdot e$, this
	gives the desired~monotonicity.
	
	\smallskip
	{\em Statements $(ii)$-$(iii)$}.\\
	We have shown in the proof of Theorem~\ref{thm:Liouville} that, in the case~$R_0>1$,
	$v_\infty\geq v_*$ in $\RR^N$. The strict inequality follows from the strong maximum principle.
	
	We simultaneously derive properties~\eqref{decay>1}-\eqref{decay<1}.
	To do so, we set $v_*=0$ in the case $R_0<1$.
	For given $e\in\Sph$, $k>0$ and $\lambda=\sqrt{-f'(v_*)/d}$,
	consider the function
	$w(X):=ke^{-\lambda X\cdot e}$.
	Using the concavity of $f$, we derive
	$$-d\Delta w=f'(v_*) w>f(v_*+w),$$
	that is, $v_*+w$ is a supersolution to~\eqref{KPPstationary} outside $\supp I_0$.
	Let $\rho>0$ be such that $\supp I_0\subset B_\rho$.
	We choose 
	$$k:=e^{\lambda\rho}\max v_\infty,$$
	so that $v_*+w\geq v_\infty$ in $B_\rho\supset\supp I_0$.
	As a consequence, the function $\min\big(v_\infty,v_*+w\big)$
	is a generalized supersolution to~\eqref{KPPstationary}.
	It then follows from the sub/supersolution method, that there exists a solution
	$0\leq v\leq \big(v_\infty,v_*+w\big)$, which is actually positive by the strong maximum principle.
	Then $v\equiv v_\infty$ thanks to the Liouville result of Theorem~\ref{thm:Liouville}.
	We have thereby shown that 
	$$\forall |X|\geq\rho,\quad
	v_\infty(X)\leq v_*+kz(X)=v_*+ke^{-\sqrt{-f'(v_*)/d}\, X\cdot e}.$$
	This being true for all $e\in\Sph$, we obtain the upper bound
	\Fi{decay<}
	\forall |X|\geq\rho,\quad
	v_\infty(X)\leq v_*+ke^{-\sqrt{-f'(v_*)/d}\, |X|}.
	\Ff
	
	Let us derive the lower bound. Take 
	$$\lambda:=\sqrt{\frac{-f'(v_*)+2\e}d},\quad\text{ with $\e>0$},$$
	and define $z:=e^{-\lambda |X|}$. This function satisfies,
	for $|X|\geq\frac{d\lambda(N-1)}{\e}$,
	$$-d\Delta z=\bigg(f'(v_*)-2\e+ d\lambda\frac{N-1}{|X|} \bigg)z
	\leq \big(f'(v_*)-\e\big)z.
	$$
	It follows that, for $h>0$ sufficiently small,
	the function $v_*+h z$ is a subsolution
	to~\eqref{KPPstationary} for~$|X|\geq\frac{d\lambda(N-1)}{\e}$.
	Up to decreasing $h$ if need be, we further have that $v_*+hz<v_\infty$ for
	$|X|\leq\frac{d\lambda(N-1)}{\e}$, hence 
	$\underline v:=\max \big(v_\infty,v_*+hz\big)$ is a generalized subsolution
	to~\eqref{KPPstationary}. By the sub/supersolution method, there exists
	a solution $\underline v\leq v\leq s$, where $s$ is such that $f(s)<\max I_0$.
	Theorem~\ref{thm:Liouville} eventually yields 
	$v\equiv v_\infty$. 
	This shows the lower bound 
	\Fi{decay>}
	\forall |X|\geq \frac{d\lambda(N-1)}{\e},\quad
	v_\infty(X)\geq v_*+h e^{-\sqrt{\frac{-f'(v_*)+2\e}d}|X|}.
	\Ff
	
	Call 
	$$\lambda(X):=\frac{-\log(v_\infty(X)-v_*)}{|X|}.$$
	The estimates~\eqref{decay<}-\eqref{decay>}
	yield, for $|X|$ sufficiently large,
	$$
	\sqrt{\frac{-f'(v_*)}d}-\frac{\log k}{|X|}\leq
	\lambda(X)
	\leq \sqrt{\frac{-f'(v_*)+2\e}d}-\frac{\log h}{|X|},$$
	from which the limits in~\eqref{decay>1}-\eqref{decay<1} 
	follow due to the arbitrariness of $\e>0$.
\end{proof}

We now turn to the results about the Cauchy problem~\eqref{e2.2}, with an initial datum 
$v(0,X)$ nonnegative and compactly supported.

\begin{proof}[Proof of Theorem~\ref{thm:ltb}]
	Let  $\underline{v}$, $\overline{v}$ be the solutions to the Cauchy problem
	emerging from the initial data identically equal to $0$ and $s$ respectively,
	with $s>0$ large enough so that $f(s)+\max I_0<0$.
	We further take $s>\max v(0,\cdot)$. 
	The parabolic comparison principle yields
	$$\forall t>0,\ X\in\RR^N,\quad
	\underline{v}(t,X)\leq v(t,X)\leq\overline{v}(t,X).$$
	Since the initial data of $\underline{v}$, $\overline{v}$ are respectively a sub and 
	a supersolution of the problem, the comparison principle
	implies that $\underline{v}$, $\overline{v}$ are respectively increasing and decreasing in $t$
	and then, by parabolic estimates,
	they converge to two stationary solutions $0<\underline{V}\leq\overline{V}<s$.
	The convergences occur locally uniformly in space and hold true for the time derivatives.
	Theorem~\ref{thm:Liouville} implies that $\underline{V}\equiv\overline{V}\equiv v_\infty$.
	The proof is complete.	
\end{proof}

\begin{proof}[Proof of Theorem~\ref{thm:speed}]
	Take $\e\in(0,\cSIR)$ and consider a sequence $(t_n)_{n\in\N}$ diverging to~$+\infty$
	and a sequence $(X_n)_{n\in\N}$ in $\RR^N$ such that $|X_n|\leq(\cSIR-\e)t_n$.
	If $(X_n)_{n\in\N}$ is bounded, we already know from Theorem~\ref{thm:ltb} that 
	$v(t_n,X_n)-v_\infty(X_n)\to0$ as $n\to\infty$.
	Suppose now that $(X_n)_{n\in\N}$ diverges (up to subsequences).  
	Recall that $v$
	is a supersolution to the standard Fisher-KPP equation~\eqref{KPPstandard}, for which 
	spreading occurs with the asymptotic speed $2\sqrt{d f'(0)}$, that is exactly $\cSIR$.
	Because $v(1,\cdot)>0$, we infer that
	$$
	\liminf_{n\to\infty}
	\big(v(t_n,X_n)-v_\infty(X_n)\big))\geq v_*-v_*=0.$$
	To derive the upper bound, we consider the same function 
	$w(X):=ke^{-\lambda X\cdot e}$ as in the proof
	of Theorem~\ref{thm:further}$(ii)$, 
	with $\lambda=\sqrt{-f'(v_*)/d}$ and $e\in\Sph$, $k>0$. 
	We have seen that $v_*+w$ is a supersolution to~\eqref{KPPstationary} outside $\supp I_0$.
	We then take $k$ large enough, independently of $e$, so that
	$v_*+w(X)>v(t,X)$ for all $t>0$ and $X\in\supp I_0\cup \supp v(0,\cdot)$.
	Hence, by comparison, $v(t,X)<v_*+w(X)$ for all $t>0$, $X\in\RR^N$. 
	This being true for any $e\in\Sph$, with $k$ independent of $e$, yields
	$v(t,X)\leq v_*+ke^{-\lambda |X|}$, for all $t>0$, $X\in\RR^N$.
	It follows that
	$$
	\limsup_{n\to\infty}
	v(t_n,X_n)\leq \limsup_{n\to\infty}\big(v_*+ke^{-\lambda |X_n|}\big)=v_*.$$
	The proof of the first limit stated in the theorem is achieved. 
	
	Let us deal with the second limit. For
	$c=\cSIR=2\sqrt{d f'(0)}$, $\lambda=\frac{\cSIR}{2d}$ and any given $e\in\Sph$ and $k>0$, the function
	$w(t,X):=ke^{-\lambda(X\cdot e-c t)}$ satisfies
	$$\partial_t w-d\Delta w-f'(0)w = \big(c\lambda-d\lambda^2 - f'(0)\big)w=0.$$
	Hence, by the concavity of $f$, it is a supersolution to~\eqref{e2.2} outside $\supp I_0$.
	We choose~$k$ large enough, independently of $e$, in such a way that
	$w(0,X)>v(t,X)$ for all $t>0$ and $X\in\supp I_0\cup \supp v(0,\cdot)$.
	Hence, by comparison, $v(t,X)<w(t,X)$ for all $t>0$, $X\in\RR^N$, and therefore, letting $e$ vary in
	$\Sph$, we get
	$$v(t,x)\leq  ke^{-\lambda(|X|-\cSIR t)},$$
	which gives the desired limit.	
\end{proof}

Using the same reflection argument as in the proof of Theorem~\ref{thm:further},
one can show that 
the solution $v(t,X)$ to~\eqref{e2.2} is 
radially decreasing with respect to $X$ outside the support of $I_0$.

%
%
%
%
%
%
%
%
%

We conclude this section with a monotonicity result with respect to the diffusion coefficient $d$.
We assume now for simplicity that the initial datum $v(0,\cdot)$ is identically equal to $0$.

\begin{Prop}\label{pro:d}
	Under the assumption that $I_0$ is radially decreasing, the values $v_\infty(0)$ and $v(t,0)$, 
	for any $t>0$, are decreasing with respect to the diffusion coefficient~$d$.
\end{Prop}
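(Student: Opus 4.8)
The plan is to eliminate the $d$-dependence from the differential operator by rescaling space, thereby transferring the entire effect of $d$ onto the source term, and then to invoke the parabolic comparison principle. The origin $X=0$ is the distinguished point precisely because it is the fixed point of the rescaling map and the maximum point of the radially decreasing datum $I_0$.

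First I would fix $d_1<d_2$ and denote by $v_1,v_2$ the corresponding solutions of \eqref{e2.2}-\eqref{v0}. For a solution $v$ with diffusion $d$, set $\hat v(t,\xi):=v(t,\sqrt d\,\xi)$. A direct computation using $\Delta_\xi\hat v=d\,\Delta_X v$ gives $\partial_t\hat v-\Delta_\xi\hat v=f(\hat v)+I_0(\sqrt d\,\xi)$ with $\hat v(0,\cdot)\equiv0$; that is, $\hat v$ solves the same semilinear problem but with diffusion normalised to $1$ and with the rescaled source $J_d(\xi):=I_0(\sqrt d\,\xi)$. Crucially $\hat v(t,0)=v(t,0)$, so it suffices to compare the two rescaled solutions at the origin.

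Next I would exploit that $I_0$ is radially decreasing: for every fixed $\xi$, since $|\sqrt d\,\xi|=\sqrt d\,|\xi|$ is increasing in $d$ and $I_0$ is non-increasing in the radial variable, the map $d\mapsto I_0(\sqrt d\,\xi)$ is non-increasing, whence $J_{d_2}\le J_{d_1}$ pointwise. Both rescaled problems start from the same (zero) initial datum, and $f$ is smooth, hence locally Lipschitz, so the parabolic comparison principle applies: the solution with the larger source dominates. Therefore $\hat v_1\ge\hat v_2$ everywhere, and in particular $v_1(t,0)=\hat v_1(t,0)\ge\hat v_2(t,0)=v_2(t,0)$ for every $t>0$, which is exactly the claimed monotonicity of $v(t,0)$ in $d$. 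Strict monotonicity, where desired, follows from the strong maximum principle, as $J_{d_2}<J_{d_1}$ wherever $I_0$ is strictly decreasing.

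Finally, for the stationary value I would let $t\to+\infty$: by Theorem~\ref{thm:ltb}, $v_i(t,0)\to v_\infty^{d_i}(0)$, so passing to the limit in $v_1(t,0)\ge v_2(t,0)$ yields $v_\infty^{d_1}(0)\ge v_\infty^{d_2}(0)$. I do not anticipate a genuine obstacle here; the only points requiring care are the elementary verification that the rescaling fixes the value at the origin and preserves the equation, and the applicability of comparison to the semilinear problem, both routine. It is worth noting that radial symmetry of the solution itself is \emph{not} needed---only the radial monotonicity of $I_0$, which is precisely what makes $J_d$ monotone in $d$. (An alternative, purely linear, route via the heat-kernel representation of $v(t,0)$ works for $f\equiv0$ but does not handle the nonlinearity as cleanly, so I would favour the scaling argument above.)
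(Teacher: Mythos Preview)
Your proposal is correct and follows essentially the same route as the paper: the space rescaling $\hat v(t,\xi)=v(t,\sqrt d\,\xi)$ to normalise the diffusion, the pointwise ordering $I_0(\sqrt{d_2}\,\xi)\le I_0(\sqrt{d_1}\,\xi)$ from radial monotonicity, the parabolic comparison principle, and the passage $t\to+\infty$ via Theorem~\ref{thm:ltb} for the stationary value. The paper states the strict inequality $v^1(t,0)>v^2(t,0)$ directly from comparison and then invokes the strong maximum principle for the stationary limit, exactly as you do.
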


\begin{proof}
	Let $v^1,v^2$ be the solutions 
	to~\eqref{e2.2} associated with the coefficients $d=d^1$ and $d=d^2$ respectively, with $0<d_1<d_2$.
	The function $\t v^j(t,X):=v^j(t,\sqrt{d^j}X)$, for $j=1,2$, satisfies
	$$\t v^j_t-\Delta \t v^j= f(\t v^j)+I_0(\sqrt{d^j}X)\quad(t>0,\ X\in\RR^2).$$
	Thus, owing to the monotonicity of $I_0$, the comparison principle yields 
	$\t v^1(t,X)>\t v^2(t,X)$ for all $t>0$, $X\in\RR^N$, which, evaluated at $X=0$, gives
	$v^1(t,0)>v^2(t,0)$.
	
	Moreover, because $v^j$ converges towards the steady state $v_\infty^j$ as $t\to+\infty$,
	thanks to Theorem~\ref{thm:ltb}, one further derives 
	$$v_\infty^1(\sqrt{d^1}X)=\lim_{t\to+\infty}\t v^1(t,X)\geq
	\lim_{t\to+\infty}\t v^2(t,X)=v_\infty^2(\sqrt{d^2}X).$$
	This inequality is actually strict due to the strong maximum principle.
	We infer that $v_\infty^1(0)>v_\infty^2(0).$
\end{proof}


\subsection{The $SIRT$ model}\label{sec:proofSIRT}

We will make repeatedly use of the weak and strong comparison principles for the road-field system.
They are provided by~\cite[Proposition 3.2]{BRR2} in the case $I_0\equiv0$,
and one can check that the presence of the bounded source term $I_0$ does not affect their proofs. 
When applied to a stationary subsolution
$(\underline u,\underline v)$ and supersolution $(\overline u,\overline v)$ satisfying
$(\underline u,\underline v)\leq(\overline u,\overline v)$, the strong comparison principle
implies that the inequality is strict unless
$(\underline u,\underline v)\equiv(\overline u,\overline v)$.
Here and in the sequel, inequalities are understood component-wise.

\begin{proof}[Proof of Theorem~\ref{thm:Liouville-SIRT}]
	We start with constructing a stationary supersolution to~\eqref{e2.4}.
	Take $0<\sigma<\sqrt{\alpha/d}$ and 
	define the functions
	\Fi{uv-supersolution}
	\overline u:=K\frac{2\nu+d\sigma}\mu,\qquad
	\overline v:=K\big(1+e^{-\sigma y}\big),
	\Ff
	where $K$ is a positive constant that will be fixed later.
	The pair $(\overline u,\overline v)$ satisfies the last equation of~\eqref{e2.4}.
	Moreover, we compute	
	$$-D \partial_{xx} \overline u-\nu \overline v(x,0)+\mu \overline u=2dK\sigma,$$
	$$-d\Delta \overline v-f(\overline v)\geq -Kd\sigma^2-S_0+K\alpha,$$
	We then choose~$K$ sufficiently large so that the above right-hand sides are larger than
	$\max T_0$ and $\max I_0$ respectively. Then $(\overline u,\overline v)$
	is a supersolution to~\eqref{e2.4}.
	
	The existence of a positive, stationary solution follows from the sub/supersolution method,
	applied with $(0,0)$ as a subsolution and 
	$(\overline u,\overline v)$ as a supersolution.
	Owing to the strong maximum principle, this provides us with a solution 
	$$(0,0)<(u_\infty^r,v_\infty^r)<(\overline u,\overline v).$$
	
	We derive the uniqueness result, as well as the limit at infinity, by distinguishing the cases
	$R_0\leq1$ and $R_0>1$.
	
	\smallskip
	{\em Case $R_0>1$}.\\
	The positive steady state $(u_\infty^r,v_\infty^r)$ is a supersolution to the problem with $I_0, T_0\equiv0$,
	which reduces to the system studied in~\cite{BRR2}. For such system, 
	we know from \cite[Theorem~4.1]{BRR2} that positive solutions converge as $t\to+\infty$ to the steady state
	$(\nu/\mu,1)v_*$ ($v_*=1$ for the $f$ in~\cite{BRR2}), whence by comparison,
	\Fi{uv>}
	(u_\infty^r,v_\infty^r)\geq(\nu/\mu,1)v_*.
	\Ff
	
	Consider a sequence $((x_n,y_n))_{n\in\N}$ in $\RR\times\RR^+$, with
	$(y_n)_{n\in\N}$ diverging to $+\infty$.
	Then, by elliptic estimates, as $n\to\infty$, $v_\infty^r(x+x_n,y+y_n)$
	converges locally uniformly (up to subsequences) towards a 
	solution $\t v$ of the equation
	$-d\Delta \t v=f( \t v)$ in $\RR^2$.
	Moreover, $\t v\geq v_*$ due to~\eqref{uv>}.
	Then, as seen in the Proof of Theorem~\ref{thm:Liouville}, we necessarily have $\t v\equiv v_*$.
	This proves the second limit of the theorem in the case $R_0>1$.
	
	Take now a diverging sequence 
	$(x_n)_{n\in\N}$ in $\RR$, and let
	$(\t u(x),\t v(x,y))$
	be the limit of (a subsequence of) $(u_\infty^r(x+x_n),v_\infty^r(x+x_n,y))$,
	whose existence is guaranteed by elliptic estimates up to the boundary.
	Thus, $(\t u,\t v)$ is a stationary solution of~\eqref{e2.4}
	with~$I_0, T_0\equiv0$, which is positive due to~\eqref{uv>}.
	It follows from~\cite[Theorem~4.1]{BRR2} that $(\t u,\t v)\equiv(\nu/\mu,1)v_*$.
	This proves the first limit stated in the theorem (the uniformity in $y$ following from the first limit).
	
	It remains to prove the uniqueness. Let $(u_1,v_1)$ and $(u_2,v_2)$ be two pairs of
	positive, bounded, stationary solutions to~\eqref{e2.4}.
	Assume by way of contradiction that 
	$$k:=\max\bigg(\sup_{\RR}\frac{u_1}{u_2}\,,\,\sup_{\RR\times\RR^+}\frac{v_1}{v_2}\bigg)>1.$$
	Because of the limits we have just proved, one of the following situations necessarily occurs:
	$$\max_{\RR}\frac{u_1}{u_2}=k,\quad\text{or}\quad
	\max_{\RR\times\RR^+}\frac{v_1}{v_2}=k.$$
	Suppose we are in the latter case.
	Then, exactly as in the proof of ...,
	the concavity of $f$ prevents the maximum from being achieved in the interior of $\RR\times\RR^+$.
	Then, it is achieved at some point $(\bar x,0)$, and Hopf's lemma yields
	$$\partial_y(kv_2-v_1)(\bar x,0)>0.$$
	Using the third equation in~\eqref{e2.4}, together with $v_1(\bar x,0)=k v_2(\bar x,0)$, we find that
	$$ku_2(\bar x)=-\frac{kd}{\mu}\partial_y v_2(\bar x,0)+\frac{k\nu}{\mu}v_2(\bar x,0)
	<-\frac{d}{\mu}\partial_y v_1(\bar x,0)+\frac{\nu}{\mu}v_1(\bar x,0)=u_1(\bar x),$$
	which contradicts the definition of $k$.
	Consider the remaining case:
	$$\max_{\RR}\frac{u_1}{u_2}=k>\frac{v_1}{v_2}.$$
	Computing the difference of the equations satisfied by $ku_2$ and $u_1$ at a point 
	$\bar x$ where this maximum is achieved, we derive
	$$0\leq-D\partial_{xx} (ku_2-u_1)(\bar x)=
	\nu (kv_2-v_1)(\bar x,0)-\mu (ku_2-u_1)(\bar x)=\nu (kv_2-v_1)(\bar x,0).$$ 
	This is impossible because $v_1<kv_2$. 
	
	We have thereby shown that $k\leq1$, that is, $(u_1,v_1)\leq(u_2,v_2)$.
	Exchanging the roles of the solutions, yields the uniqueness result.
	
	\smallskip
	{\em Case $R_0\leq1$}.\\
	We start with the uniqueness result. We derive it in the more general framework
	of nonnegative solutions, with possibly $I_0\equiv0$.
	We need to show that, for any
	two pairs $(u_1,v_1)$, $(u_2,v_2)$ of nonnegative, bounded, stationary solutions to~\eqref{e2.4},
	there holds that $(u_1,v_1)\leq(u_2,v_2)$. Assume by contradiction that, on the contrary, 
	$$h:=\max\Big(\frac\mu\nu\sup_{\RR}(u_1-u_2)\,,\,\sup_{\RR\times\RR^+}(v_1-v_2)\Big)>0.$$
	
	Suppose first that $\sup_{\RR\times\RR^+}(v_1-v_2)=h$, and let $((x_n,y_n))_{n\in\N}$
	be a maximizing sequence. If $(y_n)_{n\in\N}$ is bounded from below away from $0$, then the functions 
	$v_j(x+x_n,y+y_n)$
	converge locally uniformly (up to subsequences) towards two 
	solutions $\t v_j$ of the equation
	$-d\Delta \t v_j=f( \t v_j)$ in a neighbourhood of the origin.
	Moreover, $(\t v_1-\t v_2)(0,0)=\max(\t v_1-\t v_2)=h$, and thus
	$$0\leq-d\Delta(\t v_1-\t v_2)(0,0)=f(\t v_1(0))-f(\t v_2(0))=f(\t v_2(0)+h)-f(\t v_2(0)).$$
	This is impossible, because $f'(0)=\alpha(R_0-1)\leq0$ and hence $f$ is decreasing on $\RR^+$.
	If, instead, $y_n\to0$ (up to subsequences), then the pairs
	$(u_j(x+x_n),v_j(x+x_n,y))$
	converge locally uniformly (up to subsequences) towards two 
	solutions $(\t u_j,\t v_j)$ of the same system, which is of the form~\eqref{e2.4} with $I_0$ either
	translated by some vector $(\xi,0)$, or replaced by $0$.
	Moreover, $(\t v_1-\t v_2)(0,0)=\max(\t v_1-\t v_2)=h$.
	If the maximum is also attained at some interior point, we get
	the same contradiction as before. Therefore, Hopf's lemma 
	yields
	$$0>d\partial_y(\t v_1-\t v_2)(0,0)=
	\nu(\t v_1-\t v_2)(0,0)-\mu(\t u_1-\t u_2)(0)=h\nu-\mu(\t u_1-\t u_2)(0).$$
	This contradicts the definition of $h$.
	
	Suppose now that
	$$h=\frac\mu\nu\sup_{\RR}(u_1-u_2)>\sup_{\RR\times\RR^+}(v_1-v_2).$$
	Considering now a maximizing sequence $(x_n)_{n\in\N}$ for $u_1-u_2$, then
	the limits (up to subsequences) $(\t u_j,\t v_j)$ of the translations 
	$(u_j(x+x_n),v_j(x+x_n,y))$, which, once again, satisfy a system analogous to~\eqref{e2.4}.
	The difference $\t u_1-\t u_2$ attains its maximum $\frac\nu\mu h$ at the origin, whence
	$$0\leq-D\partial_{xx} (\t u_1-u_2)(0)=
	\nu (\t v_2-\t v_1)(0,0)-\mu (\t u_2-\t u_1)(0)=\nu (\t v_2-\t v_1)(0,0)-\nu h.$$ 
	This contradicts $\sup_{\RR\times\RR^+}(v_1-v_2)<h$.
	
	The proof of the uniqueness is concluded. Let us pass to the limits at infinity.
	The limit $v_\infty^r(x,y)\to0$ as $y\to+\infty$, uniformly with respect to $x$,
	follows from the negativity of $f$, exactly as
	in the above proof of Theorem~\ref{thm:Liouville}.
	Consider now a diverging sequence $(x_n)_{n\in\N}$ in $\RR$.
	The sequence of translations 
	$(u_\infty^r(x+x_n),v_\infty^r(x+x_n,y))$ converges locally uniformly (up to subsequences)
	towards a bounded, stationary solution $(\t u,\t v)$ to~\eqref{e2.4} with $I_0\equiv0$.
	We have seen above that the Liouville-type result holds for nonnegative stationary solutions
	of such system, hence, necessarily, $(\t u,\t v)\equiv(0,0)$.
	This concludes the proof of the theorem.	
\end{proof}

We now turn to the set of results on the solution of the Cauchy problem~\eqref{e2.4}-\eqref{u0v0}.

\begin{proof}[Proof of Theorem~\ref{thm:ltb-SIRT}]
	Since the initial datum $(0,0)$ is a subsolution to~\eqref{e2.4} which is not a solution, 
	the comparison principle
	implies that the solution $(u,v)$ is strictly increasing in $t$.
	It further implies that $(u,v)$ is smaller than the supersolution constructed 
	in the proof of Theorem~\ref{thm:Liouville-SIRT}, defined by~\eqref{uv-supersolution}.	
	It follows that, as $t\to+\infty$, $(u,v)$ converges locally uniformly to a positive, bounded, 
	stationary solution, which necessarily coincides with $(u_\infty^r,v_\infty^r)$ due to 
	Theorem~\ref{thm:Liouville-SIRT}.
\end{proof}

\begin{proof}[Proof of Theorem~\ref{thm:speed-SIRT}]
	In the case $I_0,T_0\equiv0$, the result reduces to \cite[Theorem~1.1]{BRR2},
	with the only difference that the initial datum there must not be identically equal to $(0,0)$ (otherwise
	the solution remains $(0,0)$ for all times). 
	In that case, the limit state is simply $(u_\infty^r,v_\infty^r)\equiv(\nu/\mu,1)v_*$.
	We call $\cSIRT$ the speed provided by \cite[Theorem~1.1]{BRR2}. 
	
	Take $\e\in(0,\cSIRT)$ and consider a sequence $(t_n)_{n\in\N}$ diverging to~$+\infty$
	and a sequence $(x_n)_{n\in\N}$ in $\RR$ such that $|x_n|\leq(\cSIRT-\e)t_n$.
	If $(x_n)_{n\in\N}$ is bounded, then the convergence of 
	$(u(x_n),v(x_n,y))$ towards the steady state follows from Theorem~\ref{thm:ltb-SIRT}.
	
	Suppose that $(x_n)_{n\in\N}$ diverges (up to subsequences).  	
	By the strong maximum principle, the solution 
	$(u,v)$ is strictly larger than $(0,0)$ at, say, $t=1$.
	Fitting a compactly supported datum below it, and applying the spreading result
	from \cite[Theorem~1.1]{BRR2}
	to the solution of~\eqref{e2.4} with $I_0,T_0\equiv0$, emerging from such datum,
	we infer by comparison that 
	$$
	\liminf_{n\to\infty}
	\bigg(\big(u(t_n,x_n),v(t_n,x_n,y)\big)-\big(u_\infty^r(x_n),v_\infty^r(x_n,y)\big)\bigg)\geq 
	\Big(\frac\nu\mu,1\Big)v_*-\Big(\frac\nu\mu,1\Big)v_*=0,$$
	where we have also used the limit given by Theorem~\ref{thm:Liouville-SIRT}.
	On the other hand, by comparison, $(u,v)\leq(u_\infty^r,v_\infty^r)$, for all $t\geq0$.
	Thus, the first limit in Theorem~\ref{thm:speed-SIRT} is proved.
	
	Let us derive the second limit. We restrict to $x>0$, the case
	$x<0$ being obtained by a specular argument.
	We recall how the asymptotic speed --~named here $\cSIRT$~-- 
	is obtained in~\cite{BRR2}: it is
	the least $c$ so that the linearised form of~\eqref{e2.4} around $(0,0)$, 
	i.e.~when $f(v)$ is replaced by $f'(0)v=\alpha(R_0-1)v$
	and when $I_0$, $T_0$ are set to 0, admits plane wave solutions of the form
	$$
	\big(\phi(t,x),\psi(t,x,y)\big)=e^{-a(x-ct)}(1,\gamma e^{-b y}),\ \ (a,b,\gamma)>(0,0,0).
	$$
	Thus the triple $(c,a,b)$ solves the algebraic system (the coefficient $\gamma$ is easily computed from the exchange condition):
	\begin{equation*}
	\begin{cases}
	-Da^2+ca+\di\frac{db\mu}{\nu+db}=0\\
	-d(a^2+b^2)+ca=\alpha(R_0-1).
	\end{cases}
	\end{equation*}
	Hence, for $c=\cSIRT$, the pair $(\phi,\psi)$ above is a solution to the linearised system and therefore,
	by the concavity of $f$, it is a supersolution
	to the original system~\eqref{e2.4} outside the supports of $I_0$, $T_0$.
	The second limit in Theorem~\ref{thm:speed-SIRT} for $x>0$ then follows by comparison
	with $k(\phi,\psi)$, with $k$ sufficiently large so that $k(\phi,\psi)\geq 
	(u_\infty^r,v_\infty^r) (\geq (u,v))$ inside the supports of $I_0,T_0$.
\end{proof}

\begin{proof}[Proof of Proposition~\ref{p4.2}] 
	For $x\in\RR$, define $\tau_*(x)$ as the first time $t$ such that 
	\begin{equation*}
	\label{e6.1001}
	v(t,x,0)=\frac{v_*}2.
	\end{equation*}
	This is possible due to Theorem~\ref{thm:speed-SIRT} and~\eqref{uv>}.
	These also entail that the function~$\tau_*(x)$ is locally bounded and satisfies~\eqref{tau}.
	It is also clear that $\inf\tau>0$, because $v$ identically vanishes at $t=0$ and it
	is uniformly continuous by parabolic estimates.
	Assume \eqref{e4.100} to be false. Namely, there exists a sequence 
	$(x_n)_{n\in\N}$ in $\RR$ and a bounded sequence $(y_n)_{n\in\N}$ in $[0,+\infty)$
	such that one of the following situations occurs:
	\begin{equation}
	\label{e6.1000}
	\lim_{n\to\infty}I(\tau_*(x_n),x_n,y_n)=0,\quad\text{or}\quad
	\lim_{n\to\infty}T(\tau_*(x_n),x_n)=0.
	\end{equation}
	Because $\tau(x)$ is bounded from below away from zero and bounded from above,
	and $I$,~$T$ are positive for $t>0$,
	we necessarily have that $(x_n)_{n\in\N}$ diverges.
	Set 
	$$
	\begin{array}{rll}
	T_n(t,x):=&T(\tau_*(x_n)+t,x_n+x),\ \ I_n(t,x,y):=I(\tau_*(x_n)+t,x_n+x,y),\\ 
	S_n(t,x,y):=&S(\tau_*(x_n)+t,x_n+x,y);
	\end{array}
	$$
	from parabolic estimates, a subsequence of $(S_n,I_n,T_n)_{n\in\N}$ 
	(that we may assume without loss of generality to be the whole sequence) converges to an entire
	(i.e., for all $t\in\RR$) solution 
	$(S_\infty,I_\infty,T_\infty)$ of the $SIRT$ system~\eqref{e1.1}. 
	We may also assume $(y_n)_{n\in\N}$ to converge to 
	some $y_\infty\geq0$; by~\eqref{e6.1000} we have that either
	$I_\infty(0,0,y_\infty)=0$ or 
	$T_\infty(0,0)=0$. 
	In the latter case we deduce from the last equation in~\eqref{e1.1} that~$T_\infty\equiv0$
	and then~$I_\infty\equiv0$; in the former case the same conclusion follows from
	the strong maximum principle and the Hopf lemma, using the first and third equations in~\eqref{e1.1}.
	Set 
	$$
	u_n(t,x)=u(\tau_*(x_n)+t,x_n+x),\ \ v_n(t,x,y)=v(\tau_*(x_n)+t,x_n+x,y);
	$$
	as $u_n=\partial_tT_n$ and $I_n=\partial_tv_n$, a subsequence (that we may once again to be the whole sequence) converges to a $t$-independent pair 
	$(u_\infty(x),v_\infty(x,y))$, as their time derivatives converge to 0 as $n\to\infty$. So, $(u_\infty,v_\infty)$  is a stationary solution of~\eqref{e2.4} with $I_0,T_0\equiv0$. 
	But we know from~\cite[Theorem~4.1]{BRR2} that the Liouville-property
	holds for such system, that is, either $(u_\infty,v_\infty)\equiv(0,0)$ or
	$(u_\infty,v_\infty)\equiv(\frac\nu\mu ,1)v_*$.
	This is impossible because
	$$v_\infty(0,0)=\lim_{n\to\infty}v(\tau_*(x_n),x_n,0)=\frac{v_*}2,$$
	by the definition of $\tau_*(x)$. 
\end{proof}

We now prove the result about the rate of decay of the steady state.

\begin{proof}[Proof of Theorem~\ref{thm:decay}]
	We derive these limits in the case $x\to+\infty$, the limits at $-\infty$ being analogous.
	
	\smallskip
	{\em Upper bound.}\\
	%
	In order to simultaneously treat the cases $R_0\leq1$ and $R_0>1$,
	we set in the former $v_*:=0$. It follows that $f'(v_*)\leq0$ for all $R_0>0$. 
	Consider the pair $(\tilde u,\tilde v)$ given by~\eqref{expsol}, with 
	$(a,b,\gamma)=(a_*,b_*,\gamma_*)$ unique positive solution of~\eqref{algebraic} if $R_0\neq1$, 
	or $(a_*,b_*,\gamma_*)=(0,0,\mu/\nu)$ if $R_0=1$.
	Namely, $(\tilde u,\tilde v)$ satisfies the linearised problem~\eqref{linearised}. 
	By the concavity of $f$, we see that, for $h>0$,
	$$f(v_*+h\tilde v)\leq -\zeta h\tilde v,$$
	and therefore the pair $(\frac\nu\mu v_*+h\tilde u,v_*+h\tilde v)$ is a supersolution to~\eqref{e2.4}
	outside the supports of $T_0$,~$I_0$.
	We take $h$ large enough so that, inside these supports,
	such pair is larger than the supersolution
	$(\overline u,\overline v)$ used 
	in the proof of Theorem~\ref{thm:Liouville-SIRT}, defined by~\eqref{uv-supersolution}.
	As a consequence, the pair
	$$
	\Big(\,\min\Big(\frac\nu\mu v_* +h\tilde u\,,\,\overline u\Big)
	\,,\,\min\big(v_*+h\tilde v\,,\,\overline v\big)\Big)
	$$
	is a generalised supersolution to~\eqref{e2.4}.
	Therefore, we can find a stationary solution between $(0,0)$ and such supersolution, which
	necessarily coincides with $(u_\infty^r,v_\infty^r)$ thanks to the Liouville result
	of Theorem~\ref{thm:Liouville-SIRT}.
	We have thereby shown that 
	$$u_\infty^r\leq\frac\nu\mu v_* +h\tilde u,\qquad
	v_\infty^r\leq v_*+h\tilde v,$$
	and thus the desired upper bounds.
	
	\smallskip
	{\em Lower bound.}\\
	Fix $\zeta>-f'(v_*)\geq0$.
	We consider now the pair $(\tilde u,\tilde v)$ from~\eqref{expsole}, with 
	$0<\e<1$ and 
	$(a,b,\gamma)=(a_*^{\zeta,\e},b_*^{\zeta,\e},\gamma_*^{\zeta,\e})$ unique positive solution of~\eqref{algebraice}.
	The function $\tilde v(x,y)$ vanishes at $y=y^\e:=-\frac{\log\e}{2b_*^{\zeta,\e}}$, 
	it is bounded in the half-strip
	$$\mc{S}^\e:=\{x>0,\ 0<y<y^\e\},$$
	and, by construction, satisfies there $-d\Delta\tilde v=-\zeta \tilde v$.
	Hence, because $f(v^*)=0$ and $\zeta<-f'(v_*)$,
	the function $v^*+h \tilde v$ is a subsolution to the second equation of~\eqref{e2.4}
	in~$\mc{S}^\e$ provided $h>0$ is sufficiently small, depending on $\zeta$.  
	As a consequence, choosing $h=h^{\zeta}$ small, we have that the pair 
	$$(\underline u,\underline v):=\Big(\frac\nu\mu v_* +h\tilde u,v_*+h^{\zeta}\tilde v\Big)$$ is a 
	subsolution to~\eqref{e2.4} in~$\mc{S}^\e$.
	Up to replacing $h^{\zeta}$ with a smaller quantity $h^{\zeta,\e}$ if need be, 
	we can also require that 
	$$\underline u(0)\leq u_\infty^r(0),\qquad 
	\max_{y\in[0,y^\e]}\underline v(y)\leq \min_{y\in[0,y^\e]}v_\infty^r(y),$$
	and, in addition, that in the half-strip
	$\mc{S}^\e$, $(\underline u,\underline v)$ is smaller than
	the supersolution~$(\overline u,\overline v)$ defined by~\eqref{uv-supersolution}.
	Let us consider the solution $(u,v)$ of the evolution problem~\eqref{e2.4}
	having $(\overline u,\overline v)$ as initial datum.
	We know from the Liouville-type result 
	that $(u,v)\searrow (u_\infty^r,v_\infty^r)$ as $t\to+\infty$.
	It follows that, for all $t>0$,
	$$u(t,0)\geq u_\infty^r(0)\geq\underline u(0),\qquad
	\forall y\in[0,y^\e],\quad
	v(t,0,y)\geq v_\infty^r(0,y)\geq\underline v(0,y),$$
	and moreover $v(t,x,y^\e)>0=\underline v(x,y^\e)$ for $x\geq0$.
	We can therefore apply the comparison principle for the road-field system
	in the half-strip $\mc{S}^\e$ and deduce that, there, 
	$(u,v)$ remains larger than $(\underline u,\underline v)$
	for all $t>0$. Thus, we infer that
	$$\forall x>0,\quad
	u_\infty^r(x)\geq \frac\nu\mu v_*+h^{\zeta,\e} e^{-a_*^{\zeta,\e} x},$$	
	$$\forall x>0,\ y\in[0,y^\e],\quad
	v_\infty^r(x,y)\geq v_*+
	h^{\zeta,\e}\gamma_*^{\zeta,\e} e^{-a_*^{\zeta,\e} x}\big(e^{-b_*^{\zeta,\e} y}-\e e^{b_*^{\zeta,\e} y}\big).$$
	Owing to the arbitrariness of $\zeta>-f'(v_*)$ and $0<\e<1$, one then
	gets the desired lower bound using \eqref{zetae} and noticing that $y^\e\to+\infty$ as $\e\to0$.
	%
	%
\end{proof}

We conclude with the comparison between the steady state for the models without and with the
line.

\begin{proof}[Proof of Theorem~\ref{thm:effect}]
	The existence of the set $\mc{E}^+$, of the form $|x|\geq\rho$, $y<h$, directly follows from
	Theorems~\ref{thm:further} and~\ref{thm:decay}. Let us show the existence of $\mc{E}^-$.
	
	In the case where $I_0(x,y)$ is symmetric with respect to $y$, the stationary solution
	$v_\infty$ is symmetric with respect to
	$y$ too, by uniqueness.
	Instead, if $\supp I_0\subset \RR\times[0,+\infty)$ then we know that $v_\infty(x,y)$
	is decreasing with respect to $y$ in $[0,+\infty)$, for any $x\in\RR$. Then, in any case, $\partial_y v_\infty(x,0)\leq0$
	for all $x\in\RR$.
	Thus, integrating the equation $-d\Delta v_\infty=f(v_\infty)+I_0(x,y)$ on $\RR\times(0,+\infty)$ 
	yields
	$$\int_{\RR\times(0,+\infty)}\big(f(v_\infty)+I_0(x,y)\big)dx\,dy=
	d\int_{\RR}\partial_y v_\infty(x,0)dx\leq0.$$
	On the other hand, integrating the equations for $(u_\infty^r,v_\infty^r)$ 
	shows\footnote{\ The integrations are justified by Theorems~\ref{thm:further} and~\ref{thm:decay},
		together with elliptic estimates.}
	\[\begin{split}
	\int_{\RR\times(0,+\infty)}\big(f(v_\infty^r)+I_0(x,y)\big)dx\,dy &=
	d\int_{\RR}\partial_y v_\infty^r(x,0)dx\\
	&=\int_{\RR}\big(\nu v_\infty^r(x,0)-\mu u_\infty^r\big)dx=0.
	\end{split}\]
	Subtracting these two integrals we find that
	$$\int_{\RR\times(0,+\infty)}f(v_\infty)dx\,dy\leq
	\int_{\RR\times(0,+\infty)}f(v_\infty^r)dx\,dy.$$
	Recall that $v_\infty$ and $v_\infty^r$ are larger than $v_*$,
	and that $f$ is decreasing on $(v_*,+\infty)$. It follows that
	$$\int_{\mc{E}^+}f(v_\infty)dx\,dy>
	\int_{\mc{E}^+}f(v_\infty^r)dx\,dy,$$
	and therefore
	$$\int_{(\RR\times(0,+\infty))\setminus\mc{E}^+}f(v_\infty)dx\,dy<
	\int_{(\RR\times(0,+\infty))\setminus\mc{E}^+}f(v_\infty^r)dx\,dy.$$
	This in turn implies that there exists a set $\mc{E}^-\subset 
	(\RR\times(0,+\infty))\setminus\mc{E}^+$ where $v_\infty>v_\infty^r$.
	
\end{proof}


\section{The influence of $R_0$ and other parameters}\label{sec:parameters}
We wish to understand here how the different coefficients in our model will influence the speed $c_{SIR}^T$. For that we first write \eqref{e1.1} in non-dimensional form. We then write the algebraic system that leads to the 
(non-dimensional) velocity. Finally, we study how the reduced parameters will contribute to the enhancement of the $SIRT$ velocity. The space and time variables are non-dimensionalised as
\begin{equation*}
t=\frac\tau\alpha,\qquad (x,y)=\sqrt{\frac{d}\alpha}(\xi,\zeta),
\end{equation*}
while the unknowns $T(t,x)$ and $I(t,x,y)$ are expressed as
\begin{equation*}
T(t,x)=S_0\TT(\tau,\xi),\qquad I(t,x,y)=S_0\II(\tau,\xi,\zeta),
\qquad S(t,x,y)=S_0\SST(\tau,\xi,\zeta).
\end{equation*}
The parameters of importance are then found to be
\begin{equation*}
\DD=\frac{D}d,\qquad R_0=\frac{\beta S_0}\alpha,
\qquad \bar\nu=\frac\nu\alpha,\qquad \bar\mu=\frac\mu\alpha.
\end{equation*}
The speed $\cSIR$ is expressed as
\begin{equation*}
\cSIR=\sqrt{d\alpha}\wSIR,\qquad \wSIR=2\sqrt{R_0-1}.
\end{equation*}
System \eqref{e1.1} is then rewritten as
\begin{equation*}
\left\{ \begin{array}{lll} \partial_\tau \mathcal {I}-\varDelta \mathcal {I}+ \mathcal
	{I}=R_0 \mathcal {S}\mathcal {I}& (\tau>0,\ \xi \in \mathbb {R},\ \zeta>0)\\ \partial_\tau
	\mathcal {S}=-R_0 \mathcal {S}\mathcal {I}& (\tau>0,\ \xi \in \mathbb {R},\ \zeta>0)\\ -
	\partial_\zeta \mathcal {I}=\bar{\mu }\mathcal {T}-\bar{\nu }\mathcal {I}& (\tau>0,\ \xi \in
	\mathbb {R})\\ \partial_\tau \mathcal {T}-\mathcal {D}\partial_{\xi \xi }\mathcal
	{T}=\bar{\nu }\mathcal {I}(\tau ,\xi ,0)-\bar{\mu }\mathcal {T}& (\tau >0,\ \xi \in \mathbb {R}).
\end{array} \right.
\end{equation*}
The integrated quantities
$$
\UU(\tau,\xi)=\int_0^\tau\TT(\sigma,\xi))d\sigma,\qquad \VV(\tau,\xi,\zeta)=\int_0^\tau\II(t,x,y)ds.
$$
will then solve
\begin{equation}
\label{e.param.5}
\left\{ \begin{array}{ll} \partial_\tau \mathcal {U}-\mathcal {D}\partial_{\xi \xi }
	\mathcal {U}=  \bar{\nu }\mathcal {V}(\tau ,\xi ,0)-\bar{\mu }\mathcal {U}+\mathcal {T}_0(\xi )
	& (\tau>0,\ \xi \in \mathbb {R})\\ \partial_\tau \mathcal {V}-\varDelta \mathcal
	{V}=f(\mathcal {V})+\mathcal {I}_0(\xi ,\zeta )& (\tau>0,\ \xi \in \mathbb {R},\ \zeta>0)\\ -
	\partial_\zeta \mathcal {V}(\tau ,\xi ,0)=\bar{\mu }\mathcal {U}(\tau ,\xi )- \bar{\nu }\mathcal
	{V}(\tau ,\xi ,0)& (\tau >0,\ \xi \in \mathbb {R}). \end{array} \right.
\end{equation}
The function $f$ is given by 
$f(\VV)=R_0(1-e^{-\VV})-\VV$, so that, $f'(0)=R_0-1=\di\frac{\wSIR^2}4$. The initial quantities $\II_0$ and $\TT_0$ have obvious meanings. Now, recall that the  the minimal reduced speed for \eqref{e.param.5}, that we name $\wSIRT$, is shown to be the least $w$ so that the algebraic system in $a,b$
\begin{equation}
\label{e.param.7}
\left\{
\begin{array}{rll}
-\DD a^2+wa+\di\frac{\bar\mu b}{\bar\nu+b}=&0\\
-(a^2+b^2)+wa=\di\frac{\wSIR^2}4.
\end{array}
\right.
\end{equation}
has solutions. 
Let us discuss how the parameters $R_0$, $\bar\mu$, $\bar\nu$, $\DD$ interact so as to yield a large minimal speed $\wSIRT$, while $\wSIR$ is small. The latter condition just means that~$R_0$ is only slightly larger than 1. So, $\wSIR$ is now a small parameter. From the second equation of \eqref{e.param.7} we suspect that $a$, $b$ and $w$ will scale like $\wSIR$, while the first equation leads us to think that $a$   will additionally scale like $\DD^{-1/2}$, and that $w$ will scale like $\DD^{1/2}$. So, we perform a last round of scalings:
\begin{equation}
\label{e.param.10}
a=\frac{\wSIR}{\sqrt\DD}\bar a,\ \ b=\wSIR\bar b,\ \ w=\sqrt\DD\wSIR\bar w,
\end{equation}
and we introduce the new parameters
\begin{equation}
\label{e.param.9}
\lambda=\frac{\bar\mu}{\bar\nu\wSIR},\ \ \rho=\frac{\wSIR}{\bar\nu}.
\end{equation}
This leads to the final reduced system
\begin{equation}
\label{e.param.8}
\left\{
\begin{array}{rll}
-\bar a^2+\bar w\bar a+\di\frac{\lambda\bar b}{1+\rho\bar b}=&0\\
-(\di\frac{\bar a^2}{\DD}+\bar b^2)+\bar w\bar a=\di\frac14,
\end{array}
\right.
\end{equation}
Recall now that we are interested in a situation where $R_0$ is slightly above 1, that is, $\wSIR$ is small. From the scalings \eqref{e.param.10}, enhancement of propagation by the line should be achieved by a large reduced diffusion coefficient $\DD$. As the parameter $\rho$ is proportional to $\wSIR$, we anticipate that it will be small. Let $\WSIRT(\lambda)$ the minimal reduced speed in~\eqref{e.param.8}, we will look for it  in the limit $\DD\to+\infty$, $\rho\to0$ and $\wSIR\to0$.
This  amounts to estimating the minimal speed, still called $\WSIRT(\lambda)$ in the simplified system
\begin{equation}
\label{e.param.11}
\left\{
\begin{array}{rll}
-\bar a^2+\bar w\bar a+{\lambda\bar b}=&0\\
-\bar b^2+\bar w\bar a=\di\frac14.
\end{array}
\right.
\end{equation}
The first equation gives an inverted parabola $\Gamma_{1,\lambda,\bar w}$:
$$
\bar a=\frac{\bar w+\sqrt{\bar w^2+4\lambda\bar b}}2:=g(\bar w,\lambda,\bar b),
$$
starting from the point $(\bar b=0,\bar a=\bar w)$, while the second equation is the standard parabola $\Gamma_{2,\bar w}$
$$
\bar a=\frac1{\bar w}\biggl(\frac14+\bar b^2\biggl):=h(\bar w,\bar b).
$$
And so, we want to make $\Gamma_{1,\lambda,\bar w}$ and $\Gamma_{2,\bar w}$ intersect in the $(\bar b,\bar a)$ plane.
Given the behaviour of $g$ and $h$ for large $\bar b$, the other variables being fixed, we deduce that $\Gamma_{1,\lambda,\bar w}$ and $\Gamma_{2,\bar w}$ always intersect if $\bar w>\di\frac12$. This implies
$$
\WSIRT(\lambda)\leq\frac12.
$$
On the other hand, $\Gamma_{1,0,1/2}$ and $\Gamma_{2,1/2}$ intersect at their very start, that is $\bar b=0,\bar a=\di\frac14$, so that $\WSIRT(0)=\di\frac12$. Notice that one way to achieve  $\lambda=0$ is to have
$$
\wSIR\to0,\ \ \bar\nu\wSIR\to+\infty,\ \ \ \bar\mu=O(1),
$$
that is, a very high transmission from the domain to the line and a normal transmission from the line to the domain. More generally, one may also have
$$
\bar\mu\propto \wSIR^{\alpha},\ \bar\nu\propto\wSIR^{-1+\alpha},\ \ \ \ \hbox{for any $\alpha\in(0,1)$}.
$$
This corresponds to a small transmission from the road to  the field, yet a large transmission fromm the field to the road. This is sufficient to accelerate the epidemics.

Let us now increase $\lambda$, while keeping $\bar w\leq\di\frac12$. 
Since $g$ is strictly concave in $\bar b$, while $h$ is strictly convex in $\bar b$, the two graphs may have zero, one or two intersection points, the case of one intersection corresponding to the sought for reduced 
$SIRT$ velocity ${\WSIRT}(\lambda)$. Since 
$$
\frac{\partial}{\partial\lambda}\biggl(\frac{\partial g}{\partial{\bar b}}\biggl)=\frac{\bar w^2+4\lambda\bar b}{(\bar w^2+4\lambda\bar b)^{3/2}}>0,
$$
the function $\lambda\mapsto{\WSIRT}(\lambda)$ is strictly decreasing. One may also easily show that it tends to 0 as $\lambda\to\infty$. Summing up, we have proved the existence of a strictly decreasing function $\WSIRT$, with $\WSIRT(0)=\di\frac12$, tending to 0 at infinity, such that 
\begin{equation}
\label{e.param.12}
\lim_{_{\DD\to+\infty,\wSIR\to0}}\frac{\wSIRT}{\sqrt\DD\wSIR}=\WSIRT(\lambda),\ \ \ \lambda=\frac{\bar\mu}{\bar \nu \wSIR}.
\end{equation}
This means, in particular, that $\wSIRT$ can be quite large even if the reproduction number $R_0$ is close to 1. If such is the case, then $\wSIR$ is small. However, as we saw above, the speed $\wSIRT$ may be rendered large in several instances. 



\section{Discussion and conclusions}\label{sec:conclusions}

In this paper, we discuss the effects of the presence of a road on the spatial propagation of an epidemic within the context of a spatial $SIR$ model. The road has specific diffusion and infected can travel faster along it. To this end, we introduce a new model that we call a $SIRT$ model. In addition to the classical $S$, $I$ and $R$
compartments, it involves a compartment $T$ for travelling infected on this road. Here we only discuss the case of local Brownian diffusion and local interactions. 
In a forthcoming paper \cite{BRRnonloc}, we carry an analogous analysis for non-local interactions. 

By means of a classical transformation, this model can be reduced to a system involving a non-homogeneous Fisher-KPP type equation. 
The unknown functions for this system are
$$
u(t,x):=\int_0^t\TI(s,x)ds,\qquad v(t,x,y):=\int_0^tI(t,x,y)ds.
$$
This allows us to extend previous works and to derive some rather precise properties of this model. 
The main outcomes of our work are the following.
\begin{enumerate}
	\item We first show that the $SIRT$ system in the $(u,v)$ unknowns admits
	a unique positive (bounded) steady state, which describes the long-time behaviour of the solutions of the evolution system. When $R_0\leq 1$, this steady state 
	tends to 0 at infinity. We interpret this as saying that the epidemic does not propagate. On the contrary, when $R_0>1$, the steady state converges to some positive constant and the epidemic has propagated. 
	Thus, the position of~$R_0$ relative to 1 still governs the propagation or dying out of the epidemic.
	At this stage, the scenario is exactly the same as for the standard $SIR$ model.
	
	\item We compute an {\em asymptotic speed of propagation} for this model,
	that we call $\cSIRT$, and compare it 
	with $\cSIR$, the speed of propagation for the classical $SIR$ model with diffusion~\eqref{eSIR}.
	We show that $\cSIRT>\cSIR$ if  $D >2d$, where $D$ and $d$ are the diffusion 
	coefficients on the road and in the rest of the territory respectively.  In the case $D\leq 2d$, 
	then,  $\cSIRT=\cSIR$.
	Thus, the presence of a road with fast diffusion enhances the speed of propagation of the epidemic.
	
	\item We show that the $SIRT$ system is governed by four parameters: the basic reproduction number~$R_0$ (from which the classical $SIR$ speed of propagation is computed), the reduced transmission coefficients $\bar\mu$ and $\bar\nu$, and the ratio $\mathcal{D}= D/d$ between the diffusion on the road and the diffusion in the field. We find that, 
	even if $R_0$ is very close to $1$,
	the diffusion on the road may trigger a wave of contamination spreading at high speed. Even though the growth of the infection at each location may be slow, the propagation along the road may be fast. This may lead to situations where an epidemic may seem dormant and thus innocuous while it spreads seeds far away bringing about outbreaks and clusters apparently unconnected to the regions with a significant prevalence.  
	
	\item  We compare the total cumulative number of infected individuals per location, $I_{tot}(x)$, 
	in the cases with and without the road. We show that, compared with
	the standard $SIR$ model, the $I_{tot}(x)$ in the presence of the road 
	is larger in the range of $x$ large, that is, far from the epicentre 
	of the epidemic.
	This result is not intuitive a priori. Indeed, from the enhancement of the speed of spreading of the 
	epidemic wave by the road, it also follows that at any location the epidemic peak lasts less than without this enhancement. Therefore, one might have thought  that, moving faster, the total number of infected by the epidemic would go down. However, far from the epicentre, the contrary happens. 
	We also prove that while the total number of infected is higher far away, 
	there is also a region~$\mc{E}^-$, presumably close to the epicentre, where the total number of infected $I_{tot}(x)$ for the model with the road
	is smaller than the corresponding one for the standard $SIR$ model.
	It would be interesting to characterise such a set~$\mc{E}^-$ in some specific cases,
	establishing for instance whether it actually contains the epicentre of the epidemic.
	We leave this as an open question.
	
	The lower number of infected near the epicentre due to the presence of the road
	could be related to another phenomenon that we observe on the standard $SIR$ model. 
	In Proposition~\ref{pro:d} above we prove that
	the quantity $I_{tot}(x)$ evaluated at the epicentre of the epidemic $x=0$ is a decreasing function
	of the diffusion coefficient~$d$. This reflects the fact that a higher diffusion coefficient 
	``scatters'' more quickly the infected individuals far from the epicentre.
	The same mechanism could be at work for our model $SIRT$, where the road allows for more infected individuals to move away from the centre. 
	
	\item In a general manner, the $SIRT$-type of models we introduce here adds a compartment in the population dynamics and fills a gap at an intermediate scale. Indeed, for instance, most models presently used in monitoring the COVID-19 epidemic rely on the detailed analysis of two types of networks: the global network of air travel or the microscopic socio-economic or family and schools network. As our analysis shows, the intermediate network of roads and railways play an important role in the spatial spreading of epidemics at levels of a region or a country.
	
\end{enumerate}

\medskip
This model and its generalisations open the way to many open problems. For instance, 
a similar study would have to be carried out when there is diffusion not only of the infected but also of the susceptibles. This would lead to a system:
\begin{equation}
\label{ed2}
\left\{ \begin{array}{ll} \partial_tI-d\varDelta I+\alpha I=\beta SI& (t>0,\
	x\in \mathbb {R},\ y>0)\\ \partial_tS - d_S \varDelta S =-\beta SI& (t>0,\ x\in \mathbb {R},\
	y>0)\\ -d\partial_yI=\mu T-\nu I& (t>0,\ x\in \mathbb {R},\ y=0)\\ \partial_t T
	-D\partial_{xx}T=\nu I(t,x,0)-\mu T& (t>0, \ x\in \mathbb {R},\ y=0). \end{array} \right.
\end{equation}

\bigskip
The model we have presented and analysed in this paper sheds light on the effect of a road within an environment of slow diffusion for the spreading of epidemics. It allows us to explain some observations and to uncover various effects. The simplified structure allows us to carry a fairly complete mathematical analysis. 
Yet, this model could lend itself to more practical developments. 
Indeed, involving a newtork of roads should yield more precise results. To take into account roads in a more realistic fashion in discrete models is an important perspective.




\end{document}